\documentclass{article}
\usepackage{amsthm}
\usepackage{amsfonts}
\usepackage{amssymb}
\usepackage{amsmath}
\usepackage{nccmath}
\usepackage{mathtools}
\usepackage{mathrsfs}
\usepackage{setspace}
\usepackage{graphicx}
\usepackage[export]{adjustbox}
\usepackage{hyperref}
\usepackage[numbers,sort&compress]{natbib}
\onehalfspacing
\usepackage{enumerate}
\usepackage{authblk}
\usepackage{placeins}

\newtheorem{theorem}{Theorem}[section]

\theoremstyle{definition}
\newtheorem{definition}[theorem]{Definition}

\newtheorem{corollary}[theorem]{Corollary}

\theoremstyle{remark}
\newtheorem{remark}[theorem]{Remark}

\numberwithin{equation}{section}

\usepackage{amsmath,amssymb,graphicx} 
\usepackage[margin=1in]{geometry} 
\usepackage{enumerate}
\usepackage{authblk}
\usepackage{placeins}
\usepackage{array}
\usepackage{academicons}
\usepackage{xcolor}
\usepackage{svg}
\usepackage[utf8]{inputenc}
\usepackage{booktabs}
\usepackage{tikz}
\usepackage{placeins}
\providecommand{\keywords}[1]{\textbf{\textit{Keywords:}} #1}
\providecommand{\subjclass}[1]{\textbf{\textit{MSC2020:}} #1}
\begin{document}

\nocite{*} 

\title{ Analysis of the  Differential-Difference Equation  $y(x+1/2)-y(x-1/2) = y'(x)$ }

\author{Hailu Bikila Yadeta \\ email: \href{mailto:haybik@gmail.com}{haybik@gmail.com} }
  \affil{Salale University, College of Natural Sciences, Department of Mathematics, Fiche, Oromia,  Ethiopia}
\date{\today}
\maketitle

\noindent
\begin{abstract}
In this paper we study some solution techniques of differential-difference equation
$$ y'(x)  = y(x + 1/2)- y(x- 1/2),$$
first without an initial condition and then with some initial function $h$ defined on the unit interval $ [-1/2, 1/2]$. We show some sufficient conditions that an initial function $h$ is admissible, i.e., it yields a unique continuous solution on some symmetric interval about $0$.
\end{abstract}

\noindent\keywords{ differential-difference equation, initial value problem, Fourier transform, admissible initial data, characteristic function, characteristic equation }\\
\subjclass{Primary 34K06}\\
\subjclass{Secondary 26A18, 39B22}

\section{Introduction}
In this paper we study some  type of differential-difference equations. We  make use of the following definition of differential-difference equations.
\begin{definition}
A differential-difference equation is an equation in an unknown function and certain of its derivatives, evaluated
at arguments which differ by any of a fixed number of values. See, for example, \cite{BK}.
\end{definition}
 In other texts, for example, in \cite{KM} a differential-difference equation is defined as a functional differential equation, or a differential equation with  deviating arguments, in which the argument values are discrete. The  general form of differential-difference equation is given by
 \begin{equation}\label{eq:generalform}
     y ^{m} (x) = f(x,  y ^{m_1}(x- \mu_1 (x))), y ^{m_2}(x- \mu_2 (x))),..., y ^{m_k}(x- \mu_k (x))),
 \end{equation}
 where $y(x) \in \mathbb{R}^n$, $ m_1,m_2,...,m_k \geq 0 $, and $ \mu_1 (x),  \mu_2 (x),...,  \mu_k (x) \geq 0 $.
\begin{remark}
  In most textbooks, in place of the scalar variable $x$ that we use here, the scalar variable $t$ which commonly signify time in time-varying process is used. We use $x$ as an independent scalar variable and $y $ as unknown scalar variable that depends on $x$ and its shifts  in this paper.
\end{remark}

 \begin{definition}
   A differential-difference equation (\ref{eq:generalform}) is said to be  \emph{retarded}, \emph{neutral}, or \emph{advanced} according to the quantity of
    $ \max \{ m_1, m_2....,m_k\}$ is \emph{less than},  \emph{is equal to}, or \emph{is greater than} $m$. See \cite{KM}, \cite{BK}.
 \end{definition}
Here are some examples of retarded, neutral, and advanced differential-difference equations.
 \begin{itemize}
   \item $y'(x)= y(x-1)+y(x-2)$, is a retarded differential-difference equation.
   \item  $y'(x)=y'(x-1)+y(x-2)$, is a neutral differential-difference equation.
   \item  $y'(x)= y''(x+2)-y(x-1)$, is  an advanced differential difference equation.
 \end{itemize}
 As listed in the research paper by  E. Yu. Romanenco and  A. N. Sharkovisky (see \cite{DS},\cite{DSA}) \cite{ANS}, one of the three key areas of applications of difference equations with continuous time is in the study of differential-difference equation theory. It is pointed out there that the theory of differential-difference equations, especially differential-difference equations of neutral type, should contain, at least formally, the theory of continuous-time difference equations. In the physical sciences, the differential-difference equations play a vital role in the modeling of complex physical phenomena. The differential-difference models are used in the vibration of particles in lattices, the flow of current in a network, and the pulses in biological chains. For example, see  \cite{BGH},\cite{UIIF}, and the references therein. For abstract theory functional differential equations, solvability and related topics see \cite{AMR}. For Symbolic computation of hyperbolic tangent solutions for nonlinear differential–difference equations see \cite{BGH}. The analysis of some specific differential-difference equations are discussed in some earlier works. For example, see \cite{SJ}.

In this paper, we study the linear differential-difference equation defined on continuous space
\begin{equation}\label{eq:DDE}
  y(x+1/2)-y(x-1/2)-y'(x)=0.
 \end{equation}
 As a motivational introduction to the current problem, we discuss the classical mixing problem appearing in several textbooks of ordinary differential equations. We may regard the current problem as a specific condition of the classical problem.  In the classical mixing problem,  the amount $y(t)$ of a solvent in Kg dissolved in a tanker of solution  at  a time instant $t$ when the solution of a concentration $C_{\text{in}}$ inflows at a volume flow rate $R_{\text{in}}$ in units $m^3 /sec $ and the well-stirred mixture of concentration $C_{\text{out}}$  leaves the tanker at an outflow rate of $R_{\text{out}}$ is calculated. The essential equation of the mixing model is given by
\begin{equation}\label{eq:mixtureproblem}
    \frac{dy}{dt}(t) = C_{\text{in}}(t)R_{\text{in}}(t)-C_{ \text{out}}(t) R_{\text{out}}(t),
\end{equation}
where $R_{\text{in}}$ and $R_{\text{out}}$ are the volume flow rates  of the inflowing and the outflowing fluids expressed in units of $m^3/s $, whereas  $C_{\text{in}} $ and   $C_{ \text{out}}$  are the concentrations  of the inflowing fluid and the outflowing fluids  given in units of $Kg/ m^3 $. So that the quantity $dy/dt $ is given in the units of $Kg/s $.  In most cases  and practical examples, the expression $ C_{\text{in}}(t)R_{\text{in}}(t)-C_{ \text{out}}(t) R_{\text{out}}(t) $ is dependent on the current time instant $t$  only, and not some previous time $t-h$, or some future time $ t+h$ for some positive number $h$. Therefore, the mathematical model given by the equation (\ref{eq:mixtureproblem}) yields an ordinary differential equation. That is
$$ C_{\text{in}}(t) R_{\text{in}}(t)-C_{ \text{out}}(t) R_{\text{out}}(t)= a(t)y(t)+ b(t),   $$
for some functions $a$ and $b$. See, for example, \cite{NSS}, \cite{KH}.
Another possible model is  the nonlinear model given in the form
$$ \frac{dy}{dt}(t)= Q(t,y(t)) $$
where the net mass flow rate $Q$ is given in units of $Kg/s$.This model is a nonlinear differential equation.
For the current problem, assume that the rate of change of the dissolved solvent in the tank, $\frac{dy}{dt}$, at a time instant $t$, is determined by the difference between the total amount of solvent available in the tank at some future time $t+1/2 $ and  the total amount of solvent dissolved in the tank at a previous time $t-1/2 $.
 As a result, the model equation in this case becomes $y'(t)=y(t+1/2)-y(t-1/2)$. This model differers from both the classical mixing problem where the problem is linear and the nonlinear model where the mass flow rate depends on the current instant $t$ the quantity $y(t)$ at the instant. It is not a differential equation model but a differential-difference equation model with two shifts  $t+1/2 $ and $t-1/2 $ in the independent variable.

 Assume that for some change in the quantities  $R_{\text{in}}(t)$,  $R_{\text{out}}(t)$, $C_{\text{in}}(t)$, and $C_{\text{out}}(t)$, the simultaneous equation
\begin{equation}\label{eq:decomposition}
    \begin{cases}
      y'(t)= C_{\text{in}}(t)R_{\text{in}}(t)-C_{ \text{out}}(t) R_{\text{out}}(t) \\
      y(t+1/2)-y(t-1/2) = C_{\text{in}}(t)R_{\text{in}}(t)-C_{ \text{out}}(t) R_{\text{out}}(t),
    \end{cases}
  \end{equation}
  has a solution. This is exactly the solution of the differential-difference equation (\ref{eq:DDE}).
   The first of these equations has solutions that can be found easily as a linear differential equation, and the second one is a linear difference equation with continuous argument. What is important is that the two equations in (\ref{eq:decomposition}) should have coinciding solutions, which is also a solution to the differential-difference equation.

We show that the solution to the equation (\ref{eq:DDE})  consists of an infinite number of solutions. Among the possible sets of solutions is the set of all polynomials of degree less than or equal to $2$, $\{y(x) =ax^2+bx+ c, a,b,c \in \mathbb{ R}  \}$.   We find some class of solutions to the equation, including analytic solutions that can be represented in Taylor's series. The initial value  problem for equation \ref{eq:DDE}) has a  unique solution for appropriate initial function $y_0$, where an initial function $y_0$ is given and set to be defined on the unit interval $[-1/2, 1/2]$. We shall also show that the interval of existence of the solution and the smoothness of the solution  depend on the smoothness of the initial function $y_0$.

Another  use of  this differential-difference equation (\ref{eq:DDE}) is to find a curve $y(x)=f(x)$ defined on $ \mathbb{R} $, with the property that the slope of  the chord connecting the two points $ (x-1/2 , y(x-1/2) )$ and $ (x+1/2 , y(x+1/2) )$  on the curve is equal to the slope of the tangent line at the point $ (x, y(x))$. So, the slope of the secant over any unit interval in the domain is equal to the slope of the tangent line at the midpoint of the unit interval.
 \begin{center}
     \includegraphics[width=0.5 \textwidth]{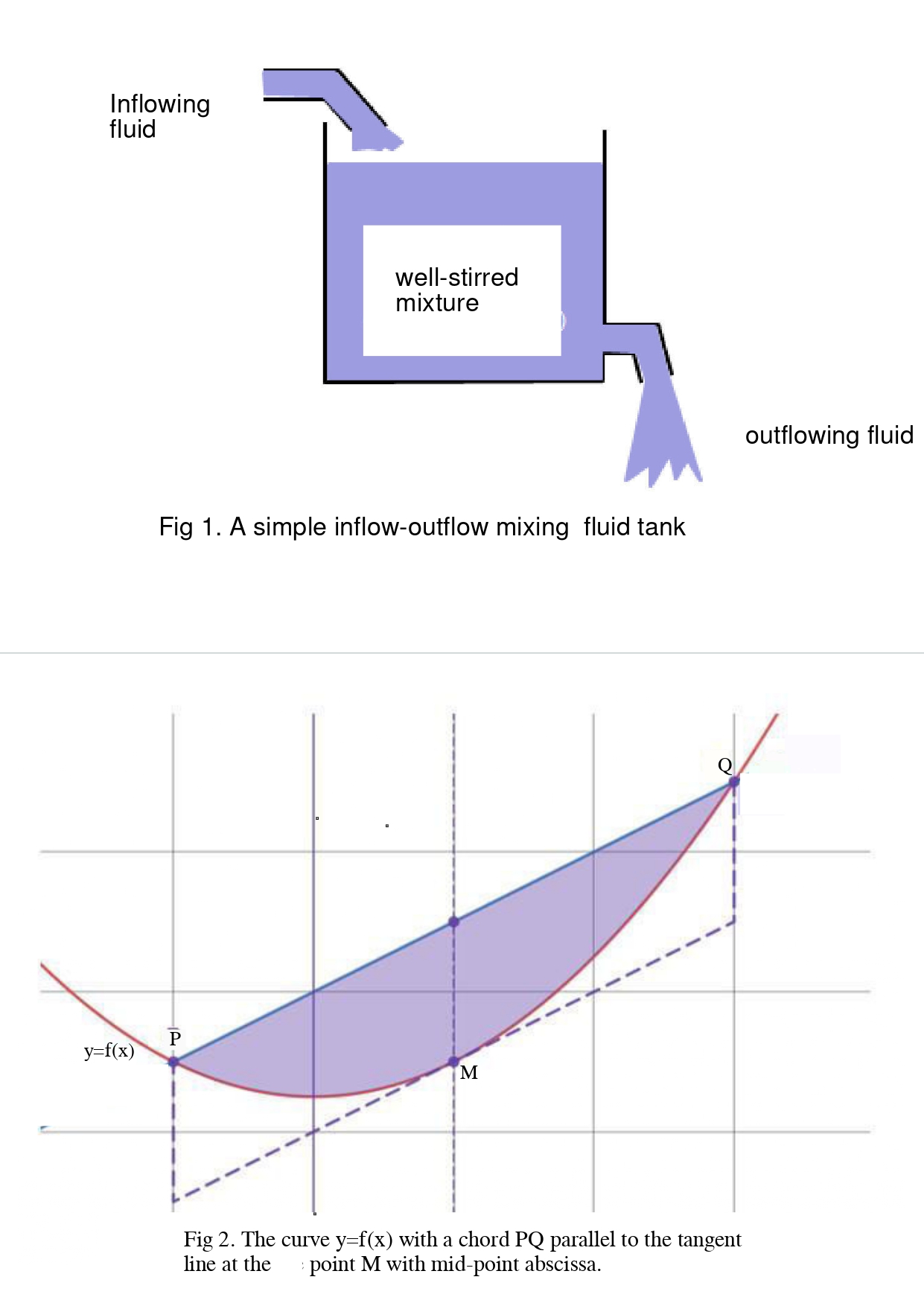}
      \end{center}

\section{ The Differential-Difference Equation  $y(x+1/2)-y(x-1/2) = y'(x) $}

\subsection{Definitions of some Operators and their Relations}
For each $ h \in \mathbb{R} $, we define the shift operator $ E ^h $, and the identity operator $ I $ as
$$ E ^h y(x) := y(x+h),\quad  I y(x) := y(x) .$$
We write $ E^h $ as $E$ rather  than $ E^{1}$ when $h=1$. We define $ E^0 $ as $ I $.
We define the forward difference operator $\Delta $ and the backward difference operator $ \nabla $ as follows:
$$ \Delta y(x):=  (E-I)y(x)= y(x+1) -y(x),\quad  \nabla y(x)= (I-E^{-1})y(x)= y(x)-y(x-1). $$
For $h> 0$, the central difference operator $ \delta ^h $ is defined as
$$ \delta ^h y(x):= \frac{y(x+h)-y(x-h)}{2h} . $$
Finally, we denote  the central difference operator by $L$, which is a special case of $ \delta ^h $, where $h=1/2 $, and the differential operator by $D$,  as follows:
\begin{equation}\label{eq:differencialanddifference}
 Ly(x):=y(x+1/2)-y(x-1/2),\quad  Dy(x):= \frac{d}{dx}y(x)=y'(x).
\end{equation}

\begin{remark}
   We observe the following relations among difference operators:
   $$ E ^{-1/2} L = \nabla, \quad E ^{1/2} L = \triangle, \quad L^2 = \nabla \triangle .$$
   Therefore, the operator $ L $ is the geometric mean of the forward operator $ \triangle $ and the backward operator $\nabla $. Also observe that
   $$ L^2= E-2I+ E^{-1}= (E-I)-(I- E^{-1}) =\triangle - \nabla .$$
 \end{remark}
For detailed discussion of difference operators and their calculus see, for example, \cite{CHR}, \cite{MT},\cite{LB}.
\begin{theorem}\label{eq:paritytheorem}
  The operators $ L $ and $ D $ are \emph{parity-changing operators}. That is, under these operators the image of  even(odd)function is odd (even) function.
\end{theorem}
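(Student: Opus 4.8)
The plan is to verify each of the four assertions directly from the definitions, recalling that a function $y$ is \emph{even} when $y(-x)=y(x)$ and \emph{odd} when $y(-x)=-y(x)$. Writing the two parity relations compactly as $y(-x)=\pm y(x)$ lets me handle the even and odd cases in parallel, with the understanding that the upper sign corresponds to the even case and the lower to the odd case.

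First I would handle the differential operator $D$. Starting from the parity relation $y(-x)=\pm y(x)$, I differentiate both sides with respect to $x$; by the chain rule the left-hand side acquires a factor of $-1$, yielding $-y'(-x)=\pm y'(x)$, equivalently $y'(-x)=\mp y'(x)$. Thus the sign governing the parity is reversed, so $Dy$ is odd when $y$ is even and even when $y$ is odd, as claimed.

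Next I would treat the central difference operator $L$, recalling from \eqref{eq:differencialanddifference} that $Ly(x)=y(x+1/2)-y(x-1/2)$. Setting $g=Ly$, I evaluate $g(-x)=y(-x+1/2)-y(-x-1/2)$ and apply the parity of $y$ term by term. The key observation is that $-x+1/2=-(x-1/2)$ and $-x-1/2=-(x+1/2)$, so under the parity relation the two shifted arguments are interchanged: in the even case the two terms swap to give $g(-x)=y(x-1/2)-y(x+1/2)=-g(x)$, while in the odd case the interchange is accompanied by an overall sign, giving $g(-x)=-y(x-1/2)+y(x+1/2)=g(x)$. Hence $Ly$ carries the opposite parity to $y$ in both cases.

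Since the arguments are entirely elementary, I do not anticipate a genuine obstacle. The only point demanding care is the simultaneous sign change and interchange of the two shifted terms in the computation for $L$: it is precisely this swap of the arguments $x\mp 1/2$, rather than a chain-rule sign as for $D$, that produces the parity reversal, and keeping the two mechanisms conceptually distinct is what makes the short proof transparent.
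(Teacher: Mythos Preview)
Your proposal is correct and follows essentially the same direct computation as the paper's proof for the operator $L$: evaluate $Ly(-x)$, use the parity of $y$ together with the identities $-x\pm 1/2=-(x\mp 1/2)$, and observe that the two shifted terms swap. The paper's argument in fact treats only $L$ and omits the verification for $D$ entirely, so your inclusion of the chain-rule step for $D$ makes your version strictly more complete while remaining in the same elementary spirit.
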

\begin{proof}
  If $f$ is an even function, i.e, $ f(-x)=f(x)$. Then
  \begin{align*}
  Lf(-x) & =f(-x+1/2)-f(-x-1/2) \\
    & = f(x-1/2)-f(x+1/2) \\
    & -Lf(x).
  \end{align*}
  Therefore $ Lf $ is an odd function. If $g$ is odd function, i.e., $ g(-x)=-g(x)$, then
  \begin{align*}
  Lg(-x) & =g(-x+1/2)-g(-x-1/2) \\
    & = -g(x-1/2)+ g(x+1/2) \\
    &= Lg(x).
  \end{align*}
  Therefore, $ Lg $ is an even function.
\end{proof}

\begin{corollary}
Let $n \in \mathbb{N} $. Define
\begin{equation}\label{eq:Ssubnofx}
  S_n(x):= L x^n= (x+1/2)^n -(x-1/2)^n.
\end{equation}
If $ n $ is odd, then the function $ S_n $ is even; if $ n$ is even, then $ S_n $ is odd.
\end{corollary}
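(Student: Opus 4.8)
The plan is to recognize that this corollary is an immediate consequence of Theorem~\ref{eq:paritytheorem}, with the only real content being a bookkeeping check on the parity of the monomial $x^n$ itself. First I would observe that the monomial $p_n(x) = x^n$ satisfies $p_n(-x) = (-x)^n = (-1)^n x^n = (-1)^n p_n(x)$, so $p_n$ is an even function precisely when $n$ is even and an odd function precisely when $n$ is odd. Since $S_n = L p_n$ by definition \eqref{eq:Ssubnofx}, I would then simply feed this into Theorem~\ref{eq:paritytheorem}: because $L$ sends even functions to odd functions and odd functions to even functions, an odd $n$ (giving an odd $p_n$) forces $S_n$ to be even, while an even $n$ (giving an even $p_n$) forces $S_n$ to be odd. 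This two-case invocation of the parity theorem is the entire argument.

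As a self-contained verification that does not rely on Theorem~\ref{eq:paritytheorem}, I would also carry out the direct substitution, which confirms the sign pattern in one line. Evaluating at $-x$ and factoring out $(-1)^n$ gives
\begin{align*}
  S_n(-x) &= (-x+1/2)^n - (-x-1/2)^n \\
          &= \bigl(-(x-1/2)\bigr)^n - \bigl(-(x+1/2)\bigr)^n \\
          &= (-1)^n\bigl[(x-1/2)^n - (x+1/2)^n\bigr] \\
          &= (-1)^{n+1}\bigl[(x+1/2)^n - (x-1/2)^n\bigr] = (-1)^{n+1} S_n(x).
\end{align*}
From the closed form $S_n(-x) = (-1)^{n+1} S_n(x)$ the claim reads off directly: for odd $n$ the exponent $n+1$ is even so $S_n(-x) = S_n(x)$ (even), and for even $n$ the exponent $n+1$ is odd so $S_n(-x) = -S_n(x)$ (odd).

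The only place where care is needed, and hence the mildest obstacle, is getting the sign exponent right: the substitution produces a factor $(-1)^n$, but reversing the order of the two terms to match the definition of $S_n$ contributes an extra minus sign, yielding $(-1)^{n+1}$ rather than $(-1)^n$. Keeping those two sign contributions straight is what makes the final parity assignment come out correctly, and it is exactly the swap that encodes the parity-changing nature of $L$ established in Theorem~\ref{eq:paritytheorem}.
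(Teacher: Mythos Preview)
Your proposal is correct and follows essentially the same approach as the paper: observe that $x^n$ is even or odd according to the parity of $n$, and then invoke Theorem~\ref{eq:paritytheorem} to flip the parity under $L$. Your additional direct computation of $S_n(-x) = (-1)^{n+1} S_n(x)$ is a nice self-contained check, but the paper's proof stops at the appeal to the parity theorem.
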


 \begin{proof}
 If $n$ is even integer, then $y(x)= x^n $ is even function;  if $n$ is odd integer, then $y(x)= x^n $ is odd function. As the result of  Theorem \ref{eq:paritytheorem}, the corollary follows.
 \end{proof}
 The values of the polynomials $S_m(x)$ for $ 1\leq x \leq 10 $are shown in Table 1.
\FloatBarrier
\begin{table}
  \centering
  \caption{ Table for $S_m(x)$ for $ 1 \leq m \leq 10 $.}
  \begin{tabular}{||c c ||}
 \hline
 $ m $ & $ S_m(x):= L x^{m}= \sum_{k=0}^{m} \binom{m}{k}x^k\left[ (\frac{1}{2}) ^{m-k}- (-\frac{1}{2}) ^{m-k}\right]$ \\ [0.5ex]
 \hline\hline
 m=1  &  $1$  \\
\hline
m=2  &  $2x$ \\
\hline
m=3  & $ \frac{1}{4}  +3 x^2$  \\
\hline
m=4  & $ x + 4x^3 $    \\
\hline
m=5  & $  \frac{1}{16} +\frac{5}{2} x^2 +  5x^4 $    \\
\hline
m=6  &  $ \frac{3}{8}x +5x^3 +6x^5 $ \\
\hline
m=7  &  $\frac{1}{64}  + \frac{21}{16}  x^2+ \frac{35}{4}x^4+ +7x^6 $ \\
\hline
m=8  & $ \frac{1}{8}x + \frac{7}{2} x^3 + 14 x^5 +8x^7 $  \\
\hline
m=9  &  $ \frac{1}{128} +  \frac{9}{16} x^2 + \frac{63}{8} x^4 + \frac{21}{2} x^6 +9x^8 $ \\
\hline
m=10  &  $  \frac{10}{256} x + \frac{15}{8} x^3 +  \frac{63}{2} x^5 + 30 x^7 +10x^9 $  \\ [1ex]
 \hline
\end{tabular}
\end{table}
\FloatBarrier

\begin{theorem}
  Let $ n \in \mathbb{N} $. Then
  \begin{equation}\label{eq:ssubtwonx}
     S_{2n}(x) = \sum_{k=1}^{n} 2 ^{2k-2n}\binom{2n}{2k-1}x^{2k-1},
  \end{equation}
  \begin{equation}\label{eq:ssubtwonminusonex}
      S_{2n-1}(x) = \sum_{k=0}^{n} 2 ^{2k-2n}\binom{2n-1}{2k}x^{2k}.
  \end{equation}
 \end{theorem}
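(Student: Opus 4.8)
The plan is to derive both identities directly from the binomial theorem, with no induction needed, since each right-hand side is a finite sum over a single parity class of exponents. First I would expand both terms of $S_m(x)=(x+\tfrac12)^m-(x-\tfrac12)^m$ exactly as in the header of Table 1, namely
\begin{equation*}
S_m(x) = \sum_{k=0}^{m}\binom{m}{k}x^{k}\Big[\big(\tfrac12\big)^{m-k} - \big(-\tfrac12\big)^{m-k}\Big],
\end{equation*}
so that the entire content of the theorem is concentrated in the bracketed factor.

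The key observation is a parity cancellation: the bracket $\big(\tfrac12\big)^{m-k}-\big(-\tfrac12\big)^{m-k}$ vanishes whenever $m-k$ is even and equals $2\cdot(\tfrac12)^{m-k}=2^{\,k-m+1}$ whenever $m-k$ is odd. Hence only those $k$ whose parity is opposite to that of $m$ survive, which is precisely the algebraic content of the parity corollary proved above: $S_{2n}$ retains only odd powers of $x$ and $S_{2n-1}$ only even powers. This reduces each identity to a case split plus a relabeling.

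For the even case $m=2n$, the surviving indices are the odd $k\in\{1,\dots,2n-1\}$; setting $k=2j-1$ with $j=1,\dots,n$ converts $\binom{2n}{k}$ into $\binom{2n}{2j-1}$ and the factor $2^{\,k-m+1}$ into $2^{\,2j-2n}$, which after renaming $j\to k$ is exactly \eqref{eq:ssubtwonx}. For the odd case $m=2n-1$, the surviving indices are the even $k$; writing $k=2j$ yields $\binom{2n-1}{2j}$ together with $2^{\,k-m+1}=2^{\,2j-2n+2}$, producing $\sum_{j}2^{\,2j-2n+2}\binom{2n-1}{2j}x^{2j}$.

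The only genuinely delicate step is the bookkeeping of the exponent on $2$, so I would pin it down by a sanity check against Table 1. The constant term of $S_3$ is $\tfrac14$, which forces the $k=0$ term of the odd-case sum (here $n=2$) to equal $2^{-2}$, i.e. the exponent must read $2k-2n+2$ rather than $2k-2n$. I would therefore reconcile the statement of \eqref{eq:ssubtwonminusonex} with this computation before recording the final result: the power of $2$ should be $2^{\,2k-2n+2}$, with upper index $n-1$ (equivalently $n$, since $\binom{2n-1}{2n}=0$). With that correction in hand, the whole argument is a one-line expansion followed by parity cancellation and reindexing, requiring neither induction nor generating functions.
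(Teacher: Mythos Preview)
Your approach is exactly the paper's: expand $(x+\tfrac12)^m-(x-\tfrac12)^m$ by the binomial theorem, observe that the bracket $\big(\tfrac12\big)^{m-k}-\big(-\tfrac12\big)^{m-k}$ kills the terms with $m-k$ even and equals $2^{\,k-m+1}$ otherwise, and then reindex over the surviving parity class. The paper carries this out only for $S_{2n}$ and declares the odd case ``similar''.

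Your sanity check is well taken and in fact catches a genuine misprint in the stated formula \eqref{eq:ssubtwonminusonex}. For $m=2n-1$ and $k=2j$ the surviving bracket is $2^{\,2j-(2n-1)+1}=2^{\,2j-2n+2}$, not $2^{\,2j-2n}$; the test case $S_3(x)=\tfrac14+3x^2$ from Table~1 confirms this (the printed exponent would give $\tfrac{1}{16}+\tfrac34 x^2$). So the correct identity is
\[
S_{2n-1}(x)=\sum_{k=0}^{n-1}2^{\,2k-2n+2}\binom{2n-1}{2k}x^{2k},
\]
with the upper limit harmlessly extendable to $n$ since $\binom{2n-1}{2n}=0$. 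Apart from this bookkeeping correction, nothing in your argument differs from the paper's, and no additional ideas are needed.
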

\begin{proof}
  We prove only (\ref{eq:ssubtwonx}), and the proof of (\ref{eq:ssubtwonminusonex}) is similar to that of (\ref{eq:ssubtwonx}).
  \begin{align*}\label{eq:ssubtwonproof}
    s_{2n}(x) & = \left(x+\frac{1}{2}\right)^{2n}- \left(x -\frac{1}{2}\right)^{2n}  \\
     & =\sum_{r=0}^{2n}   \left( \frac{1}{2}\right)^{2n-r}\binom{2n}{r} x^r - \sum_{r=0}^{2n} \binom{2n}{r}  \left( -\frac{1}{2}\right)^{2n-r} x^r \\
     &= \sum_{r=0}^{2n}  \left[ \left( \frac{1}{2}\right)^{2n-r}-\left( -\frac{1}{2}\right)^{2n-r}\right]\binom{2n}{r} x^r \\
     & = \sum_{k=1}^{n} 2 ^{2k-2n}  \binom{2n}{2k-1} x^{2k - 1}.
  \end{align*}
  This is due to the fact that the power  $ x^r $ vanishes for even $ r $. Re-indexing sum $ r= 0 $  to $ 2n $ as a sum $ k= 1 $  to $ n $ yields ( \ref{eq:ssubtwonx}).
  \end{proof}

\begin{corollary}
$ S_n(x)$ is a polynomial of degree $ n-1 $  for each $ n\in \mathbb{N} $. Furthermore, if we write $S_n(x) $ in the expanded form, we get
$$ S_n(x) = \sum_{i=0}^{n-1}S_{n,k}x^k,  $$
 with the numerical coefficients $ S_{n,k} \geq 0 $.
\end{corollary}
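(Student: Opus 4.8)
The plan is to read the corollary off the closed forms established just above, taking the representation $S_n(x) = (x+1/2)^n - (x-1/2)^n$ from \eqref{eq:Ssubnofx} as the common starting point. Expanding both powers by the binomial theorem and subtracting, the monomial $x^{n-r}$ carries the coefficient $\binom{n}{r}\bigl[(1/2)^r - (-1/2)^r\bigr]$; this bracket vanishes for even $r$ and equals $2^{1-r}$ for odd $r$. Hence only the terms with odd $r$ survive, which is exactly the parity split recorded in \eqref{eq:ssubtwonx} and \eqref{eq:ssubtwonminusonex} and is consistent with the parity statement of Theorem \ref{eq:paritytheorem}. I would then extract the two assertions, the degree and the sign of the coefficients, from this single surviving sum.

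For the degree, I would note that the admissible values of $r$ are the odd integers with $1 \le r \le n$, so the largest power of $x$ that can appear is $x^{n-1}$, attained at $r=1$ with coefficient $\binom{n}{1}\cdot 2^{0} = n > 0$. Since $r=1$ is odd and lies in range for every $n \in \mathbb{N}$, this leading term never cancels, so $\deg S_n = n-1$ exactly. For the coefficients, writing $S_n(x) = \sum_{k=0}^{n-1} S_{n,k}\,x^k$, the coefficient of $x^k$ corresponds to $r = n-k$: when $n-k$ is odd it equals $\binom{n}{n-k}\,2^{1-(n-k)}$, a product of a binomial coefficient and a positive power of $2$, hence strictly positive, and when $n-k$ is even it is $0$. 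In either case $S_{n,k} \ge 0$, as claimed.

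I do not expect a genuine obstacle, since the computation is the same one carried out in the preceding theorem. The only point deserving care is the degree claim: one must check both that the leading coefficients of $(x+1/2)^n$ and $(x-1/2)^n$ cancel, forcing $\deg S_n \le n-1$, and that the next coefficient does not, forcing $\deg S_n = n-1$. Working from the single unified sum over odd $r$ makes this transparent and sidesteps tracking the two parity cases separately; in particular it shows directly that the extremal term nominally allowed in the odd-case formula \eqref{eq:ssubtwonminusonex} carries a vanishing binomial coefficient, confirming that the degree does not exceed $n-1$.
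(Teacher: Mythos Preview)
Your proposal is correct and is essentially the argument the paper intends: the corollary is stated without its own proof, immediately after the theorem whose binomial expansion you reproduce, so reading off the degree and the nonnegativity of the coefficients from the surviving odd-$r$ terms is exactly the intended inference. Your unified sum over odd $r$ is a mild streamlining of the paper's separate even/odd formulas \eqref{eq:ssubtwonx}--\eqref{eq:ssubtwonminusonex}, and your remark that the top term in \eqref{eq:ssubtwonminusonex} carries the vanishing binomial $\binom{2n-1}{2n}$ is a useful sanity check that the paper leaves implicit.
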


In the subsections that follow, we study the techniques and properties of solutions of the scalar differential-difference equation  given in (\ref{eq:DDE}). That can also be written in operator form as
$$ (L-D)y(x)=0. $$

\subsection{Solutions by Taylor Series Method}

 In this and subsequent subsections, we look  some methods for solving the differential-difference equation (\ref{eq:DDE}), first without an initial value and then with some initial  function $ h $ defined on the symmetric unit interval $ [-1/2, 1/2 ]$.  The Taylor series expansion of the solution $y$ is ne method of solving (\ref{eq:DDE}). This method is helpful for finding an analytic solution to the differential-difference equation.As we can see, the method necessitates an infinite number of numerical coefficients in the power series of the analytic solution $y(x)$. However, here we find only some analytic solutions, while the complete task is equivalent to solving a system of infinite linear equations with an infinite number of unknowns. Let us assume a solution of the differential-difference equation (\ref{eq:DDE}) that may be written in an infinite power series of the form
\begin{equation}\label{eq:analyticsolution}
   y(x)= \sum_{n=0}^{\infty} a_n x^n .
\end{equation}

Then
\begin{equation}\label{eq:Dofyofx}
   Dy(x) = y'(x) = \sum_{n=0}^{\infty} a_{n+1}(n+1)x^n,
\end{equation}

and
\begin{equation}\label{eq:Lofyofx}
  Ly(x)= \sum_{n=0}^{\infty} a_n S_n(x),
\end{equation}
where $ S_n(x) $ is as defined in (\ref{eq:Ssubnofx}).

\begin{theorem}\label{eq:undeterminedcoefficents}
  Assume that $y$,  whose Taylor series is given by (\ref{eq:analyticsolution}), is an analytic solution of the differential-difference equation  (\ref{eq:differencialanddifference}). Then we have the following two homogeneous systems of infinite linear equations in infinite unknowns  $ a_3, a_4, a_5, ...$

\begin{equation}\label{eq:eventriangularsystem}
 \sum_{n=2 + k}^{\infty}   2^{2k-2n} \binom{2n-1}{2k}a_{2n-1}=0,\quad k=0,1,2,...
\end{equation}

\begin{equation}\label{eq:oddtriangularsystem}
 \sum_{n=1+ k}^{\infty}    2^{2k-2n} \binom{2n-1}{2k}a_{2n}=0,\quad k=1,2,...
\end{equation}
\end{theorem}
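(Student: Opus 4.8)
The plan is to substitute the analytic ansatz (\ref{eq:analyticsolution}) into the operator equation $(L-D)y=0$ and to read off one scalar equation for each power of $x$. The two ingredients are already assembled: the series (\ref{eq:Dofyofx}) for $Dy$ and the expansion (\ref{eq:Lofyofx}) of $Ly$ in terms of the polynomials $S_n$. Because $S_0\equiv 0$, the constant coefficient $a_0$ drops out entirely, and by Theorem \ref{eq:paritytheorem} (and its corollary) every $S_{2n-1}$ is an \emph{even} polynomial while every $S_{2n}$ is an \emph{odd} polynomial. Since $Dy$ splits the same way, the identity $Ly=Dy$ separates, with no interaction between the two parities, into an even-power part that involves only the odd-indexed coefficients $a_{2n-1}$ and an odd-power part that involves only the even-indexed coefficients $a_{2n}$. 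This parity splitting is exactly what produces the two independent systems (\ref{eq:eventriangularsystem}) and (\ref{eq:oddtriangularsystem}).

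First I would extract the coefficient of $x^{2k}$ for $k\ge 0$. On the left it receives $a_{2n-1}$ times the $x^{2k}$-coefficient of $S_{2n-1}$, and by (\ref{eq:ssubtwonminusonex}) that coefficient is nonzero precisely when $2k\le 2n-1$, i.e. for all $n\ge k+1$; on the right, (\ref{eq:Dofyofx}) contributes $(2k+1)a_{2k+1}$. Symmetrically, extracting the coefficient of $x^{2k-1}$ for $k\ge 1$ uses (\ref{eq:ssubtwonx}): the left side receives $a_{2n}$ times the $x^{2k-1}$-coefficient of $S_{2n}$, namely the factor $2^{2k-2n}\binom{2n}{2k-1}$, which is nonzero for all $n\ge k$, while the right side contributes $2k\,a_{2k}$.

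The decisive step is to isolate the top (diagonal) term of each difference sum and to observe that it cancels the derivative term exactly. In the even family the term $n=k+1$ is the leading coefficient of $S_{2k+1}$ at $x^{2k}$; a direct evaluation of the binomial-and-power-of-two factor shows it equals precisely $(2k+1)a_{2k+1}$, the very quantity coming from $Dy$, so subtracting it leaves the tail $\sum_{n\ge k+2}=0$, which is (\ref{eq:eventriangularsystem}). Likewise, in the odd family the term $n=k$ is the leading coefficient of $S_{2k}$ at $x^{2k-1}$ and equals $2k\,a_{2k}$, cancelling the derivative contribution and leaving $\sum_{n\ge k+1}=0$, which is (\ref{eq:oddtriangularsystem}). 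A final reindexing (shifting the lower summation limit up by one) puts each system in the stated form; note that the lowest equations, $k=0$ in the even family and $k=1$ in the odd family, already omit $a_1$ and $a_2$, so that $a_0,a_1,a_2$ remain free—consistent with the quadratic polynomial solutions $ax^2+bx+c$—and the genuine unknowns are $a_3,a_4,a_5,\dots$.

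I expect the main obstacle to be bookkeeping rather than conceptual: keeping the binomial indices and the exponents of $2$ aligned through the reindexing, and verifying the exact cancellation of the diagonal term against $Dy$, which is what converts an apparently inhomogeneous coefficient identity into the homogeneous, triangular systems claimed. The one analytic point to record is that termwise comparison of coefficients is legitimate precisely because $y$ is assumed analytic, so that both $Ly$ and $Dy$ are honest power series that may be equated coefficient by coefficient.
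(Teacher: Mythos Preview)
Your proposal is correct and follows essentially the same route as the paper: substitute the series, use the parity splitting of the $S_n$ from (\ref{eq:ssubtwonx})--(\ref{eq:ssubtwonminusonex}), and compare coefficients of each power of $x$. The only cosmetic difference is that the paper first subtracts the leading term of each $S_n$ (writing $S'_n(x):=S_n(x)-nx^{n-1}$) so that the cancellation you describe as ``the diagonal term equals the derivative contribution'' is absorbed before collecting coefficients, rather than afterward.
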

\begin{proof}
  \begin{align*}
  Dy(x)=D( a_0+ a_1 x + a_2 x^2 + a_3x^3 + a_4x^4+...) &= Ly(x)= L( a_0+ a_1 x+ a_2 x^2 + a_3x^3 + a_4x^4+...)   \\
  \Leftrightarrow 0+ a_1+ 2a_2 x+ 3a_3x^2+ 4a_4x^3+... &= 0 + a_1+ 2a_2x + a_3S_3(x)+a_4S_4(x)+...  \\
  \Leftrightarrow 3a_3x^2+ 4a_4x^3+... & = a_3S_3(x)+a_4S_4(x)+...  \\
  \Leftrightarrow 0 &=  a_3S'_3(x)+a_4S'_4(x)+...
  \end{align*}
  where $ S'_{2n}(x)= S_{2n}(x)-2n x ^{2n-1},\,  n \in \mathbb{N} $ and $ S'_{2n-1}(x)= S_{2n}(x)-2n x ^{2n-2},\, n \geq 2,\, n \in \mathbb{N}$.

\begin{equation}\label{eq:doublesumforevens}
 \sum_{n=2}^{\infty}a_{2n-1}S'_{2n-1}(x)= \sum_{n=2}^{\infty}a_{2n-1}\left(  \sum_{k=0}^{n-1}  2^{2k-2n} \binom{2n-1}{2k} x^{2k}\right)=0
\end{equation}
\begin{equation}\label{eq:doublesumforodds}
 \sum_{n=2}^{\infty}a_{2n}S'_{2n}(x)= \sum_{n=2}^{\infty}a_{2n}\left(  \sum_{k=1}^{n-1}  2^{2k-2n} \binom{2n}{2k-1} x^{2k-1}\right)=0
\end{equation}
From (\ref{eq:doublesumforevens}), equating the sum of all coefficients of the even power $ x^{2k}$ for each $k=0,1,2,3,...$, we get an infinite triangular system of homogeneous equations ( \ref{eq:eventriangularsystem}).

The second triangular system of infinite homogeneous equations (\ref{eq:oddtriangularsystem}) is obtained from (\ref{eq:doublesumforodds}), by equating the sum of all coefficients of the odd power $ x^{2k-1}$ for each $k=1,2,3,...$. This completes the proof.
\end{proof}
In Theorem \ref{eq:undeterminedcoefficents}, the two  systems of infinite linear equations (\ref{eq:eventriangularsystem}) and (\ref{eq:oddtriangularsystem}) in infinite unknowns  $ a_3, a_4, a_5...$ induced by the Taylors series method, the coefficients $a_0,\, a_1,\, a_2 $ appearing in the solution $ y(x)= \sum_{i=0}^{\infty} a_i x ^i $ are free and arbitrary (are not involved in the  systems of infinite linear equations). The infinite systems being homogeneous, setting all the coefficients $ a_3, a_4, a_5...$ equal to zero, we shall obtain the set of solutions that comprise any polynomial in $x$ of second degree or less. Hence the following theorem arises.
\begin{theorem}
   Any polynomial of degree less than or equal to $2$, i.e., $y(x)= a_0 + a_1 x + a_2x^2, \, a_0, a_1, a_2 \in \mathbb{R} $ is a solution of (\ref{eq:differencialanddifference}).
\end{theorem}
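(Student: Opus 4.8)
The plan is to exploit the linearity of the operators $L$ and $D$ and reduce the claim to verifying the three monomials $1$, $x$, and $x^2$ one at a time. Since $L-D$ is linear, it suffices to establish $(L-D)x^k = 0$ for each $k \in \{0,1,2\}$; then for an arbitrary $y(x) = a_0 + a_1 x + a_2 x^2$ one obtains $(L-D)y = a_0(L-D)1 + a_1(L-D)x + a_2(L-D)x^2 = 0$ at once, so that $y$ solves (\ref{eq:DDE}).

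First I would compute the three base cases directly from the definition of $L$ in (\ref{eq:differencialanddifference}) and compare with $D$. Using the notation $S_n(x) = Lx^n$ from (\ref{eq:Ssubnofx}), one has $S_0(x) = L(1) = 1 - 1 = 0$, while the values already recorded in Table~1 give $S_1(x) = 1$ and $S_2(x) = 2x$. On the differential side, $D(1) = 0$, $D(x) = 1$, and $D(x^2) = 2x$. Matching these termwise yields the three identities $L(1) = D(1)$, $L(x) = D(x)$, and $L(x^2) = D(x^2)$, which are exactly what the linearity reduction requires. Assembling them gives $Ly(x) = a_1 + 2a_2 x = Dy(x)$, hence $(L-D)y = 0$.

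There is essentially no obstacle here: the entire content of the statement is the observation that $L$ and $D$ agree on the span of $\{1,x,x^2\}$. What is worth recording is \emph{why} degree $2$ is the natural ceiling. Writing $S'_n(x) := S_n(x) - D x^n = S_n(x) - n x^{n-1}$ for the residual, as in the proof of Theorem~\ref{eq:undeterminedcoefficents}, the leading terms of $S_n$ and $n x^{n-1}$ always coincide (the top coefficient of $S_n$ equals $n$, as one sees from (\ref{eq:ssubtwonx}) and (\ref{eq:ssubtwonminusonex})), so the agreement is controlled entirely by the lower-order terms. One checks that $S'_0 = S'_1 = S'_2 = 0$, whereas $S'_n \neq 0$ for every $n \geq 3$; for instance $S'_3 = S_3(x) - 3x^2 = \tfrac14 \neq 0$. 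The failure of these higher residuals to vanish is precisely what produces the nontrivial homogeneous systems (\ref{eq:eventriangularsystem}) and (\ref{eq:oddtriangularsystem}) of Theorem~\ref{eq:undeterminedcoefficents}, and it explains why the polynomial solutions stop at the quadratic level.
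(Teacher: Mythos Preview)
Your proof is correct and follows exactly the paper's approach: the paper's proof is the single line ``Direct substitution yields the desired result,'' and your explicit verification that $L$ and $D$ agree on $1$, $x$, and $x^2$ (together with linearity) is precisely that direct substitution carried out in detail. The additional paragraph explaining why the residuals $S'_n$ vanish only for $n\le 2$ is a nice contextual remark tying the result back to Theorem~\ref{eq:undeterminedcoefficents}, though it goes beyond what the statement itself requires.
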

\begin{proof}
  Direct substitution yields the desired result.
\end{proof}

\begin{remark}
Observe that  $ L 1= D 1=0,\, Lx =Dx=1,\, L x^2= Dx^2 =2x $, whereas $ \triangle x^2 = 2x+1 \neq 2x = D x^2 $,
 and $ \nabla x^2 = 2x-1 \neq 2x = D x^2 $. The space $ \mathcal{P}_2 $ of all polynomials of degree less than or equal to 2 is contained in the null space of the operator $ L-D $.
 \end{remark}

\subsection{Complex Solutions}
For the differential-difference equation (\ref{eq:DDE}), applying the Fourier transform both sides we get
\begin{equation}\label{eq:Fouriertransform}
   i\xi \hat{y}(\xi) =( e^{i\frac{\xi}{2}} - e^{-i\frac{\xi}{2}})\hat{y}(\xi)= 2i\sin(\xi/2)\hat{y}(\xi),
\end{equation}
where $\hat{y}(\xi)=\int_{-\infty}^{\infty} e^{- i x \xi} y(x)dx $.
From (\ref{eq:Fouriertransform}) we need to find the solutions in $ \mathbb{C} $  of the transcendental equation
\begin{equation}\label{eq:Transendental}
 \xi/2 = \sin(\xi/2).
\end{equation}
\begin{theorem}
 If $ z= a+bi , \, a, b \in \mathbb{R} $ is a solution of the equation(\ref{eq:Transendental}), then
\begin{equation}\label{eq:exp}
   y(x)= e^{izx}
\end{equation}
is a complex solution of the differential-difference equation (\ref{eq:DDE}).
\end{theorem}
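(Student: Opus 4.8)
The plan is to prove the statement by direct substitution of the candidate $y(x)=e^{izx}$ into (\ref{eq:DDE}), invoking the hypothesis that $z$ satisfies (\ref{eq:Transendental}) only at the very last step. Since $e^{izx}$ is entire in $x$ for any fixed $z\in\mathbb{C}$, there is no regularity obstruction whatsoever: the derivative and the two shifted values are all obtained by elementary rules. It is worth noting that the Fourier-transform computation (\ref{eq:Fouriertransform}) serves here only as motivation for \emph{where} the characteristic equation (\ref{eq:Transendental}) comes from, and is not part of the argument; indeed, when $b\neq 0$ the function $e^{izx}$ is not integrable, so one could not literally apply the Fourier transform to it, and the direct verification sidesteps this issue entirely.

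First I would write (\ref{eq:DDE}) in the operator form $Dy(x)=Ly(x)$ and compute each side for $y(x)=e^{izx}$. Differentiation gives $Dy(x)=y'(x)=iz\,e^{izx}$. Evaluating the two shifts, $y(x+1/2)=e^{iz(x+1/2)}=e^{izx}e^{iz/2}$ and $y(x-1/2)=e^{izx}e^{-iz/2}$, so that
\begin{equation*}
  Ly(x)=y(x+1/2)-y(x-1/2)=e^{izx}\left(e^{iz/2}-e^{-iz/2}\right).
\end{equation*}

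The key step is to identify the bracketed factor as $2i\sin(z/2)$, where $\sin$ denotes the complex sine given by Euler's identity $\sin w=(e^{iw}-e^{-iw})/(2i)$. This identity holds for all $w\in\mathbb{C}$, which is precisely what allows the computation to go through for a genuinely complex $z=a+bi$ rather than merely for real arguments; I regard this as the only point requiring any care. Consequently $Ly(x)=2i\sin(z/2)\,e^{izx}$.

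Finally, I would apply the hypothesis that $z$ solves (\ref{eq:Transendental}), namely $z/2=\sin(z/2)$. Substituting $\sin(z/2)=z/2$ yields $Ly(x)=2i\cdot(z/2)\,e^{izx}=iz\,e^{izx}=Dy(x)$, whence $(L-D)y(x)=0$ and $y(x)=e^{izx}$ is a complex solution of (\ref{eq:DDE}). I do not anticipate a genuine obstacle: the whole argument is a short verification, and the substance lies entirely in recognizing that the difference $e^{iz/2}-e^{-iz/2}$ collapses to $2i\sin(z/2)=iz$ exactly on the solution set of the characteristic equation.
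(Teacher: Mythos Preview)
Your proposal is correct and follows essentially the same route as the paper: direct substitution of $y(x)=e^{izx}$, factoring $Ly(x)=e^{izx}(e^{iz/2}-e^{-iz/2})=2i\sin(z/2)\,e^{izx}$, and then invoking $\sin(z/2)=z/2$ to match $Dy(x)=iz\,e^{izx}$. Your additional remarks on entirety of $e^{izx}$ and on the Fourier computation being merely heuristic are accurate but not needed for the argument itself.
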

\begin{proof}
Let $ y(x)= e^{izx}$, where $ z $ is solution of (\ref{eq:Transendental}). Then
  \begin{align*}
  Ly = y(x+1/2)-y(x-1/2)&= e^{iz(x+1/2)}- e^{iz(x-1/2)} \\
   &=e^{izx}(e^{iz/2}-e^{-iz/2})=e^{izx}2i\sin(z/2)\\
   &=e^{izx} 2i(z/2)=ize^{izx}= Dy(x).
\end{align*}
\end{proof}

\begin{theorem}
  $ z= a + bi , \, a, b \in \mathbb{R} $, is the solution of the transcendental equation (\ref{eq:Transendental}) if and only if $ (x,y)=(a,b)$ is the solution to the system of equations
  \begin{equation}\label{eq:simultaneous}
     \begin{cases}
        & x/2= \sin(x/2)\cosh(y/2),  \\
        &  y/2= \cos(x/2)\sinh (y/2).
     \end{cases}
  \end{equation}
  \end{theorem}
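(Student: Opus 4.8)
The plan is to reduce the single complex equation (\ref{eq:Transendental}) to its real and imaginary components and match them. First I would write $z = a + bi$, so that $z/2 = a/2 + i\,b/2$, and recall the standard expansion of the sine of a complex argument. The cleanest route is to start from the exponential definition $\sin w = \frac{1}{2i}\left(e^{iw} - e^{-iw}\right)$, set $w = a/2 + i\,b/2$, and use $e^{iw} = e^{ia/2}e^{-b/2}$ together with $e^{-iw} = e^{-ia/2}e^{b/2}$; expanding $e^{\pm ia/2}$ by Euler's formula and collecting terms through $\cosh$ and $\sinh$ yields the identity
$$
\sin\!\left(\frac{a}{2} + i\frac{b}{2}\right) = \sin\frac{a}{2}\cosh\frac{b}{2} + i\cos\frac{a}{2}\sinh\frac{b}{2}.
$$
Alternatively one may invoke the angle-addition formula $\sin(\alpha + \beta) = \sin\alpha\cos\beta + \cos\alpha\sin\beta$ with $\alpha = a/2$ and $\beta = i\,b/2$, together with the relations $\cos(i\,b/2) = \cosh(b/2)$ and $\sin(i\,b/2) = i\sinh(b/2)$.

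Next I would substitute this into (\ref{eq:Transendental}), so that $z/2 = \sin(z/2)$ becomes
$$
\frac{a}{2} + i\frac{b}{2} = \sin\frac{a}{2}\cosh\frac{b}{2} + i\cos\frac{a}{2}\sinh\frac{b}{2}.
$$
Since $a$ and $b$ are real and the functions $\sin, \cos, \cosh, \sinh$ carry real arguments to real values, both sides are now exhibited in the form (real part) $+\,i\,$(real part). Two complex numbers are equal precisely when their real and imaginary parts coincide, so equating real parts gives $a/2 = \sin(a/2)\cosh(b/2)$ and equating imaginary parts gives $b/2 = \cos(a/2)\sinh(b/2)$. Renaming $(a,b)$ as $(x,y)$ produces exactly the system (\ref{eq:simultaneous}), which settles the forward direction.

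For the converse I would run the same chain of equalities in reverse: if $(x,y) = (a,b)$ satisfies the two real equations, then multiplying the second by $i$, adding it to the first, and applying the complex-sine identity recovers $a/2 + i\,b/2 = \sin(a/2 + i\,b/2)$, that is, $z/2 = \sin(z/2)$. Because the separation of a complex equation into its real and imaginary parts is itself a logical equivalence, every step above is reversible, so both directions of the \emph{if and only if} are in fact handled simultaneously by a single string of equivalences.

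I do not anticipate any genuine obstacle here: the statement is an elementary decomposition of one complex equation into its two real coordinates. The only point that deserves care is the derivation of the complex-sine identity itself, and in particular bookkeeping of the factor $1/2$ in the argument and the signs in $e^{iw}$ versus $e^{-iw}$, so that the $\cosh$ term attaches to $\sin(a/2)$ on the real part and the $\sinh$ term to $\cos(a/2)$ on the imaginary part. Once that identity is verified, the theorem follows immediately.
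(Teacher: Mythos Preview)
Your proposal is correct and follows essentially the same route as the paper: expand $\sin(a/2 + ib/2)$ via the angle-addition formula together with $\cos(ib/2)=\cosh(b/2)$ and $\sin(ib/2)=i\sinh(b/2)$, then equate real and imaginary parts. If anything, your write-up is slightly more complete, since you spell out the converse direction explicitly rather than relying on the $\Leftrightarrow$ symbols the paper uses.
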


\begin{proof}
 A complex number $z= a + bi$ is a solution of (\ref{eq:Transendental})
  \begin{align*}
      \Leftrightarrow a/2 + i b/2 &= \sin(a/2+ib/2) \\
            & =\sin(a/2)\cos(ib/2)+\cos(a/2)\sin(ib/2)  \\
       &=\sin(a/2)\cosh(b/2)+i\cos(a/2)\sinh(b/2)\\
     \Leftrightarrow   a/2= \sin(a/2)\cosh(b/2) & \quad \text{and}\quad b/2= \cos(a/2)\sinh(b/2).
  \end{align*}
So $ (x,y)=(a,b)$ satisfies the system of equations (\ref{eq:simultaneous}).
\end{proof}
\begin{theorem}\label{eq:abtheorem}
  Let $z= a+bi, a,b \in \mathbb{R}$ is any solution of the transcendental equation (\ref{eq:Transendental}). Then the real part $ y(x)= \Re (e^{izx}) = e^{-bx}\cos(ax) $ and the imaginary $ y(x)= \Im (e^{izx})=e^{-bx}\sin(ax) $ are  then solutions to the differential-difference equation (\ref{eq:differencialanddifference}).
\end{theorem}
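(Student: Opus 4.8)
The plan is to exploit the linearity of the equation together with the fact that the operator $L-D$ has real coefficients, so that it acts separately on the real and imaginary parts of a complex-valued function. First I would invoke the preceding theorem, which guarantees that whenever $z=a+bi$ solves the transcendental equation (\ref{eq:Transendental}), the complex exponential $y(x)=e^{izx}$ is a genuine complex solution of (\ref{eq:DDE}); that is, $(L-D)e^{izx}=0$ identically in $x$.

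Next I would decompose the exponential explicitly. Since $iz=-b+ia$, we have
\begin{equation*}
e^{izx}=e^{-bx}\cos(ax)+i\,e^{-bx}\sin(ax),
\end{equation*}
so that $u(x):=\Re(e^{izx})=e^{-bx}\cos(ax)$ and $v(x):=\Im(e^{izx})=e^{-bx}\sin(ax)$ are exactly the two real-valued functions named in the statement. The central observation is that both $L$ and $D$ are $\mathbb{R}$-linear operators built only from real shifts and real differentiation; consequently each maps real-valued functions to real-valued functions and commutes with the operations $\Re$ and $\Im$. Hence $(L-D)(u+iv)=(L-D)u+i\,(L-D)v$, where $(L-D)u$ and $(L-D)v$ are themselves real-valued.

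I would then equate real and imaginary parts. Because $(L-D)e^{izx}=0$ and the real and imaginary parts of the left-hand side are precisely $(L-D)u$ and $(L-D)v$, both must vanish identically, giving $(L-D)u=0$ and $(L-D)v=0$. This shows that $u$ and $v$ each solve (\ref{eq:DDE}). As a cross-check I would mention the equivalent conjugation route: if $z$ solves (\ref{eq:Transendental}) then so does $-\bar z$ (since $\sin$ has real Taylor coefficients), whence $\overline{e^{izx}}=e^{-i\bar z x}$ is also a complex solution, and then $u=\tfrac{1}{2}\bigl(e^{izx}+\overline{e^{izx}}\bigr)$ and $v=\tfrac{1}{2i}\bigl(e^{izx}-\overline{e^{izx}}\bigr)$ are solutions by linearity.

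The computation is essentially automatic once linearity is in hand, so there is no serious obstacle; the only point requiring genuine care is the justification that $L-D$ preserves the real/imaginary decomposition, i.e.\ that its coefficients are real, which is what licenses splitting the single complex identity into two real ones. A reader who prefers a direct verification could instead substitute $u$ and $v$ into (\ref{eq:differencialanddifference}) and reduce the resulting trigonometric expressions to the coupled system (\ref{eq:simultaneous}); that calculation is longer but introduces no new idea, and it is the step I would expect to be the most tedious if one declines to use the linearity argument.
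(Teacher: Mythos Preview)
Your argument is correct and is the cleaner of the two routes. You invoke the earlier theorem that $e^{izx}$ is a complex solution of (\ref{eq:DDE}), then use the fact that $L-D$ is built from real shifts and real differentiation, so it commutes with $\Re$ and $\Im$; equating real and imaginary parts of $(L-D)e^{izx}=0$ immediately gives the result. The conjugation cross-check you offer is also sound.

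The paper, by contrast, does exactly the direct verification you flag as the ``tedious'' alternative: it sets $y(x)=e^{-bx}\cos(ax)$, computes $Dy$ and $Ly$ explicitly via the addition formulas, and then invokes the coupled system (\ref{eq:simultaneous}) to match the two expressions term by term; the sine case is declared similar. Your approach buys brevity and makes transparent why the result is automatic once the complex solution is in hand; the paper's approach is self-contained in the sense that it does not rely on the preceding theorem, and it shows concretely how the real conditions (\ref{eq:simultaneous}) enter the verification. Either is a complete proof.
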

\begin{proof}
  Let $y(x)= e^{-bx}\cos ax $. Then $ Dy(x)= -be^{-bx}\cos ax -ae^{-bx}\sin ax $.
  \begin{align*}
    Ly(x)= & y(x+1/2)-y(x-1/2) \\
     & = e^{-b(x+\frac{1}{2})}\cos a(x+1/2)-e^{-b(x-\frac{1}{2})}\cos a(x-1/2) \\
     & = e^{-bx}\left[e^{-\frac{b}{2}}(\cos ax \cos(a/2)-\sin ax \sin(a/2))- e^{\frac{b}{2}}(\cos ax \cos(a/2)+\sin ax \sin(a/2)) \right]\\
     &= e^{-bx} \left[ -2\cos ax \cos(a/2)\sinh(b/2)-\sin ax \sin(a/2)\cosh(b/2)\right]\\
     & =-be^{-bx}\cos ax -ae^{-bx}\sin ax \\
     &= Dy(x).
  \end{align*}
  The verification for $ y(x)= e^{-bx}\sin ax $ is similar.
\end{proof}
As to the existence of  a solution $ (x,y)=(a,b)$ of the system of equations (\ref{eq:simultaneous}), we have the following solutions  of (\ref{eq:simultaneous})  calculated by WOLFRAM ALPHA  \copyright,
\begin{align*}
  & a=-3.75626 \times  10^{-8} \quad  \text{ and}  \quad  b=2.25842 \times 10^{-9 }, \\
  & a=0  \quad \text{ and}\quad  b=-4.79706 \times 10^{-8},  \\
  & a=0 \quad \text{ and} \quad b=0, \\
  & a=0 \quad \text{ and} \quad  b=4.00874  \times 10^{-8}, \\
  & a= 2.10292\times 10^{-8} \quad    \text{ and} \quad  b = 4.04457 \times 10^{-9}.
\end{align*}
Thus, using Theorem \ref{eq:abtheorem} we have additional solutions to the differential-difference equation (\ref{eq:DDE}) other than the ones that we have discussed in the previous section.

\subsection{Integral Equation form of the Differential-Difference Equation}
\begin{theorem}
  The differential-difference equation (\ref{eq:DDE}) can be written as an integral equation
  $$ y(x)= y(0)-\int_{-\frac{1}{2}}^{\frac{1}{2}}y(s)ds + \int_{-\infty}^{\infty} \alpha(x-s)y(s)ds,  $$
  where $ \alpha(x)= \chi_{[-1/2, 1/2]}(x) $  is the characteristic function of the unit interval $[-1/2, 1/2] $.
\end{theorem}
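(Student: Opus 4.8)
The plan is to integrate the differential-difference equation over the interval from $0$ to $x$ and then recognize the resulting expression as a convolution against $\alpha = \chi_{[-1/2,1/2]}$. The statement is essentially an exercise in the fundamental theorem of calculus together with careful interval bookkeeping, so I do not expect a genuine analytic obstacle; the work is in organizing the limits of integration cleanly.

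First I would integrate both sides of (\ref{eq:DDE}) from $0$ to $x$. Since $y'(t) = y(t+1/2) - y(t-1/2)$, the left-hand side telescopes by the fundamental theorem of calculus, giving
$$ y(x) - y(0) = \int_{0}^{x} y(t+1/2)\,dt - \int_{0}^{x} y(t-1/2)\,dt. $$
Next I would apply the substitutions $u = t + 1/2$ and $u = t - 1/2$ in the two integrals respectively, so that the right-hand side becomes $\int_{1/2}^{x+1/2} y(u)\,du - \int_{-1/2}^{x-1/2} y(u)\,du$. This is the step that carries all the content.

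The key algebraic manoeuvre is to recombine these two shifted integrals into a single integral over the moving window $[x-1/2,\,x+1/2]$ plus a fixed correction term. Using additivity of the integral over adjacent subintervals, splitting each range at the interior points $1/2$ and $x-1/2$, one verifies the identity
$$ \int_{1/2}^{x+1/2} y(u)\,du - \int_{-1/2}^{x-1/2} y(u)\,du = \int_{x-1/2}^{x+1/2} y(u)\,du - \int_{-1/2}^{1/2} y(u)\,du, $$
which holds for every $x$, of either sign, provided the integrals are read as signed (oriented) integrals; this is the only point where one must be attentive to the bookkeeping.

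Finally I would identify the convolution. Since $\chi_{[-1/2,1/2]}(x-s) = 1$ exactly when $x - 1/2 \le s \le x + 1/2$, we have $\int_{-\infty}^{\infty} \alpha(x-s) y(s)\,ds = \int_{x-1/2}^{x+1/2} y(s)\,ds$, a finite integral whenever $y$ is locally integrable, so no integrability issues arise. Substituting this into the previous display and rearranging yields the claimed integral equation. To make the phrase \emph{can be written as} a genuine equivalence, I would close with a short remark on the converse: differentiating the integral equation, and noting that $\tfrac{d}{dx}\int_{x-1/2}^{x+1/2} y(s)\,ds = y(x+1/2) - y(x-1/2)$ for continuous $y$ while the two constant terms vanish, recovers (\ref{eq:DDE}) exactly.
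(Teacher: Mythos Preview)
Your proof is correct and follows essentially the same route as the paper's: both integrate the equation to conclude that $y(x)-\int_{x-1/2}^{x+1/2} y(s)\,ds$ is constant, evaluate that constant at $x=0$, and then identify the moving-window integral as the convolution $\alpha\ast y$. The paper's only variation is that it begins with the observation $Ly(x)=\dfrac{d}{dx}\displaystyle\int_{x-1/2}^{x+1/2} y(s)\,ds$, which replaces your substitution-and-recombination identity with a single antiderivative identification.
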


\begin{proof}
  Note that
$$ Ly(x)=y(x+1/2)-y(x-1/2)= \frac{d}{dx}\int_{x-\frac{1}{2}}^{x+\frac{1}{2}} y(s)ds $$
provided that $y \in  C[ x-1/2, x+1/2 ]$ for every $ x \in \mathbb{R} $. Therefore,
$$ Dy(x)-Ly(x)=0 \Leftrightarrow \frac{d}{dx}\left(y(x)-\int_{x-\frac{1}{2}}^{x+\frac{1}{2}} y(s)ds\right)=0. $$
Thus, the expression $y(x)-\int_{x-\frac{1}{2}}^{x+\frac{1}{2}} y(s)ds = c,\,  x \in \mathbb{R}  $, where $c$ is some constant. Setting $ x=0 $ yields the constant $ c = y(0)-\int_{-\frac{1}{2}}^{\frac{1}{2}}y(s)ds $. Hence, the  equivalent integral equation representation for the differential-difference equation is
$$ y(x)= y(0)-\int_{-\frac{1}{2}}^{\frac{1}{2}}y(s)ds + \int_{x-\frac{1}{2}}^{x+\frac{1}{2}} y(s)ds. $$
We further note that
$$ \int_{x-\frac{1}{2}}^{x+\frac{1}{2}} y(s)ds = \int_{-\infty}^{\infty} \alpha(x-s)y(s)=\int_{-\infty}^{\infty} \alpha(s)y(x-s)ds := (\alpha \ast y)(x), $$
where $\ast $ is the convolution. So, we write the differential-difference equation (\ref{eq:DDE}) as an integral equation
$$ y(x)= y(0)-\int_{-\frac{1}{2}}^{\frac{1}{2}}y(s)ds + \int_{-\infty}^{\infty} \alpha(x-s)y(s)ds. $$
This completes the proof of  the theorem.
\end{proof}

\subsection{The Initial Value Problem for the Differential-Difference Equation}

\begin{definition}
  Let $ I $  be some open interval in $ \mathbb{R} $. For integers $ k \geq 0 $, we denote by $ C^k( I )$  the space of functions which are $k$ times continuously differentiable in $ I  $. In particular, by $C^0(I )$ or just $C(I)$, the space of all continuous functions defined in $ I $. Also $ C^\infty(I):= \bigcap \limits_{k\geq 0} C^k(I)$. However, if $ I $ is a closed interval like $[-1/2, 1/2]$, by $h \in C^k( I )$ we mean that $h \in C^k( J )$, where $ I \subset J $ and $ J $ is some open interval in  $ \mathbb{R} $.
  \end{definition}

\begin{theorem}
Let $ k \in \mathbb{N}$. Consider the differential-difference equation \ref{eq:DDE} with additional conditions
\begin{equation}\label{eq:ckinitialvalue}
  \begin{cases}
     & y(x)= h(x), \, x \in [-1/2, 1/2], \, h \in C^k [-1/2, 1/2],\\
     & h^{(i)}(0)= h^{(i-1)}(1/2)- h^{(i-1)}(-1/2),\quad i=1,2....,k.
  \end{cases}
\end{equation}
where $ h^{(i)}$ is the $i$-th order derivative and $  h^{(0)}$ is considered as $ h $. Then there exist a unique solution $y \in C[-k/2, k/2]$  that satisfies the differential-difference equation (\ref{eq:DDE}) whenever $-k/2 \leq x-1/2 < x< x+1/2 \leq k/2 $.
\end{theorem}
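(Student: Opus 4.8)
The plan is to solve the problem by the \emph{method of steps}, extending the initial datum $h$ outward from $[-1/2,1/2]$ in increments of $1/2$, alternately to the right and to the left, until the whole interval $[-k/2,k/2]$ is covered. The two directions are governed by the two algebraic rearrangements of \eqref{eq:DDE}: the forward recursion $y(x+1/2)=y'(x)+y(x-1/2)$ and the backward recursion $y(x-1/2)=y(x+1/2)-y'(x)$. First I would use the forward recursion with $x$ ranging over $[0,1/2]$ (where $y=h$ is already known and $y'=h'$) to define $y$ on $[1/2,1]$ by the explicit formula $y(t)=h'(t-1/2)+h(t-1)$; symmetrically the backward recursion with $x\in[-1/2,0]$ defines $y$ on $[-1,-1/2]$ by $y(s)=h(s+1)-h'(s+1/2)$. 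Iterating, the $j$-th forward step ($1\le j\le k-1$) produces $y$ on $J_j^{+}:=[j/2,(j+1)/2]$ from the already-constructed values on $[(j-2)/2,\,j/2]$ via $y(t)=y'(t-1/2)+y(t-1)$ for $t\in J_j^{+}$, and I would carry along the bookkeeping that $y$ restricted to $J_j^{+}$ is of class $C^{k-j}$ (each step differentiates the previous piece once, costing one order of smoothness; since $h\in C^k$ the last needed step $j=k-1$ still yields a $C^1$ piece). Reaching the right endpoint $k/2$ requires exactly $k-1$ forward steps, and by symmetry $k-1$ backward steps reach $-k/2$.

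The heart of the argument, and the step I expect to be the main obstacle, is showing that the compatibility conditions in \eqref{eq:ckinitialvalue} are \emph{precisely} what is needed to glue the pieces together with the regularity the conclusion demands. I would prove by induction on the node index $m$ that the constructed $y$ is continuous at $t=m/2$ and moreover has matching one-sided first derivatives there, with continuity at node $m$ controlled by the condition of order $i=m$ and derivative-matching at node $m$ controlled by the condition of order $i=m+1$. The base case $m=1$ is a direct computation: continuity at $t=1/2$ reduces to $h(1/2)=h'(0)+h(-1/2)$ (condition $i=1$), while equality of $y'(1/2^{-})=h'(1/2)$ and $y'(1/2^{+})=h''(0)+h'(-1/2)$ reduces to $h''(0)=h'(1/2)-h'(-1/2)$ (condition $i=2$). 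For the inductive step I would differentiate the forward recursion and re-express the left- and right-hand one-sided derivatives at $t=m/2$ in terms of $h$ and its derivatives at $0,\pm 1/2$, so that the matching identity collapses to the $m$-th compatibility relation; the backward nodes are handled identically, exploiting the parity-type symmetry of the two recursions.

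With the matching lemma in hand I would assemble the conclusions. Continuity at every interior node $t=m/2$, $1\le m\le k-1$, uses conditions $1,\dots,k-1$, and $C^1$-matching at every such node uses conditions $2,\dots,k$; together these are exactly the $k$ hypotheses \eqref{eq:ckinitialvalue}, so the glued $y$ is continuous on all of $[-k/2,k/2]$ and is $C^1$ across each node lying in $[-(k-1)/2,(k-1)/2]$. Since each piece solves its recursion by construction, the equation $y'(x)=y(x+1/2)-y(x-1/2)$ then holds at every $x$ with $-k/2\le x-1/2$ and $x+1/2\le k/2$, including the nodes themselves. Uniqueness is immediate from the construction: any continuous $y$ that equals $h$ on $[-1/2,1/2]$ and satisfies \eqref{eq:DDE} is forced, piece by piece, to obey the same forward and backward recursions, and a short induction shows it must coincide with the function built above. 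I would also record the useful bootstrap that the right-hand side $y(x+1/2)-y(x-1/2)$ is continuous whenever $y$ is, so any continuous solution is automatically $C^1$ on the sub-interval where the equation holds, which corroborates the differentiability asserted at the nodes.
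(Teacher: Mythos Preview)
Your proposal is correct and follows essentially the same method-of-steps approach as the paper: extend $h$ outward by the forward and backward recursions and use the compatibility conditions \eqref{eq:ckinitialvalue} to secure continuity and differentiability at the half-integer nodes, with uniqueness forced by the recursion. The only organisational difference is that the paper inducts on $k$ (noting that $g:=h'$ satisfies the level-$k$ hypotheses, so the inductive hypothesis supplies the $C^{1}$ regularity needed for one further extension step), whereas you induct directly on the node index $m$; these are equivalent packagings of the same computation. One small point of precision: for $m\ge 3$ the matching at $t=m/2$ does not reduce to the single identity $i=m$ but rather to a linear combination of several of the relations (for instance at $t=3/2$ both $i=1$ and $i=3$ enter), so your inductive statement should be that, assuming conditions $1,\dots,m-1$, the \emph{additional} condition needed at node $m$ is exactly $i=m$.
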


\begin{proof}
 We use induction over $k$. If $k=1$, the only point $x$ such that $-k/2 \leq x-1/2 < x< x+1/2 \leq k/2 $ exists is $x=0$. The differential-difference equation (\ref{eq:DDE}) is satisfied at this point by the given initial condition. $ y(x)= h(x), \, x \in [-1/2, 1/2]$ is the solution is. Now we consider the case where $k=2$.  Let $ x \in (1/2, 1]$. Then $ x -1/2 \in (0, 1/2]$ and $ x -1 \in (- 1/2, 0]$. Hence
\begin{equation}\label{eq:firstrightext}
  y(x)= y'(x-1/2) + y(x-1)= h'(x-1/2) + h(x-1),\quad  x \in (1/2, 1].
\end{equation}
 The left hand limit of $y$ at $x = 1/2$ is calculated as follows from the given initial function $h$:
  \begin{equation}\label{eq:leftlimitathalf}
    \lim \limits_{x\rightarrow \frac{1}{2}-} y(x)=   \lim \limits_{x\rightarrow \frac{1}{2}-} h(x)= h(1/2).
  \end{equation}
  By (\ref{eq:firstrightext}), We have

\begin{equation}\label{eq:rightlimitathalf}
  \lim \limits_{x\rightarrow \frac{1}{2}+} y(x)= \lim \limits_{x\rightarrow \frac{1}{2}+} h'(x-1/2) + h(x-1) =h'(0)+h(-1/2).
  \end{equation}

 By using (\ref{eq:leftlimitathalf}) and (\ref{eq:rightlimitathalf}), using the condition given in (\ref{eq:ckinitialvalue}) as a bridge, we get
  \begin{equation}\label{eq:continuityathalf}
  \lim \limits_{x\rightarrow \frac{1}{2}-} y(x)= h(1/2) = h'(0)+ h(-1/2)= \lim \limits_{x\rightarrow \frac{1}{2}+} y(x).
  \end{equation}
Equation  (\ref{eq:continuityathalf}) establishes continuity of $y$ at $ x=1/2 $. Using the given condition on $ h $, we calculate the right derivative at  $ x=1/2 $ as

\begin{align}\label{eq:rightderivativeathalf}
 \lim \limits_{x\rightarrow  \frac{1}{2}+}  \frac{y(x)-y(1/2)}{x-1/2}& = \lim \limits_{x\rightarrow \frac{1}{2}+}  \frac{h'(x-1/2)+h(x-1)-h(1/2)}{x-1/2} \nonumber \\
   & = \lim \limits_{x\rightarrow \frac{1}{2}+} h''(x-1/2)+h'(x-1) \nonumber \\
   &= h''(0)+h'(-1/2)=h'(1/2).
\end{align}
The left hand derivative at $ x=1/2 $ is
\begin{equation}\label{eq:leftderivativeathalf}
 \lim \limits_{x\rightarrow  \frac{1}{2}-}  \frac{y(x)-y(1/2)}{x-1/2}= \lim \limits_{x\rightarrow  \frac{1}{2}-}  \frac{h(x)-h(1/2)}{x-1/2}= h'(1/2).
\end{equation}

 Therefore, \ref{eq:rightderivativeathalf} and \ref{eq:leftderivativeathalf} imply that $ y $ is differentiable at $ x=1/2 $. Because $h \in c^2[-1/2, 1/2]$ and by (\ref{eq:firstrightext}), $y$ is left continuous at $ x=1 $.  Let $ x \in (-1, -1/2]$. Then $ x +1/2 \in (-1/2, 0]$ and $ x +1 \in (0, 1/2]$. We have

 \begin{equation}\label{eq:firstleftext}
  y(x)=  y(x+1)-y'(x+1/2)= h'(x-1/2)+h(x-1),\quad  x \in (-1, -1/2].
\end{equation}
 We can show that $y$ is differentiable at $ x = -1/2 $ and right continuous at $ x = -1$ by using (\ref{eq:firstleftext}) and arguments that are similar to those of  $ x=1/2 $ and $x=1$.  This proves that for the initial function $h$ that meets the conditions in (\ref{eq:ckinitialvalue}) for $k=2$, we have a unique solution $ y \in C[-1, 1]$ that solves the differential-difference equation (\ref{eq:DDE}). Assume that the hypothesis holds true for any arbitrary value of $ k \in \mathbb{N} $. Then we have to prove that the hypothesis works for $k+1$ as well. Consider the differential-difference equation (\ref{eq:DDE}) with the additional conditions
  \begin{equation}\label{eq:ckplusoneinitialvalue}
  \begin{cases}
     & y(x)= h(x), \, x \in [-1/2, 1/2], \, h \in C^{k+1} [-1/2, 1/2],\\
     & h^{(i)}(0)= h^{(i-1)}(1/2)- h^{(i-1)}(-1/2),\quad i=1,2....,k, k+1.
  \end{cases}
\end{equation}
  Let us denote  $h'(x):=g(x),\, -1\leq x \leq 1/2 $.
   Now let us take $k$ of the $k+1$  conditions on $h$
   $$ h^{(i)}(0)= h^{(i-1)}(1/2)- h^{(i-1)}(-1/2),\, i=2...,k, k+1, $$
 that is equivalent to
 $$ g^{(i)}(0)= g^{(i-1)}(1/2)- g^{(i-1)}(-1/2),\quad i=1,...,k .$$
 With these $k$ conditions  let us denote by $\tilde{y} $ that satisfy the following conditions
   \begin{equation}\label{eq:modifiedckinitialvalue}
  \begin{cases}
     & \tilde{y}'(x)= \tilde{y}(x+1/2)-\tilde{y}(x-1/2),\\
     & \tilde{y}(x)= g(x), \, x \in [-1/2, 1/2], \, g \in C^{k} [-1/2, 1/2],\\
     & g^{(i)}(0)= g^{(i-1)}(1/2)- g^{(i-1)}(-1/2),\quad i=1,2....,k.
  \end{cases}
\end{equation}
Then by the induction assumption, there exists a unique solution $  \tilde{y}_k \in C( -k/2, k/2)$ of the differential-difference equation (\ref{eq:DDE}).  However the solution  $ \tilde{y}_k $ is a linear combination of shifts of $ g, g',..., g^{k-1}$. Since $ g \in C^{k} [-1/2, 1/2]$, $ \tilde{y}_k \in C^{1} [-1/2, 1/2] $. Therefore by left and right extension

\begin{equation}\label{eq:extendedmodified}
   y_{k+1}(x)=\begin{cases}
    &  \tilde{y}_k'(x+1/2)+y_k(x+1),\quad [-(k+1)/2,  -k/2) \\
     & \tilde{y}_k(x), \quad -k/2\leq x \leq k/2 \\
     &  \tilde{y}_k'(x-1/2)+y_k(x-1)  \quad (k/2 , (k+1)/2  ].
  \end{cases}
\end{equation}
Now we have to prove that $y_{k+1}  $ is differentiable at $x =\pm k/2 $ and left continuous at $ x=(k+1)/2 $ and right continuous at $  x = -(k+1)/2 $ .
    For continuity at $x = k/2 $

    \begin{equation}\label{eq:leftlimitatkover2}
    \lim \limits_{x\rightarrow \frac{k}{2}-} y_{k+1}(x)=   \lim \limits_{x\rightarrow \frac{1}{2}-} y_k(x)= y_k(k/2).
  \end{equation}

  \begin{align}\label{eq:rightlimitatkover2}
   \lim \limits_{x\rightarrow \frac{k}{2}+} y_{k+1}(x)&= \lim \limits_{x\rightarrow \frac{k}{2}+} y_k'(x-1/2) + y_k(x-1) =   y_k'(k/2-1/2)+ y_k(k/2-1) \nonumber \\
     & = y_k(k/2)-y_k(k/2-1)+y_k(k/2-1)= y_k(k/2)
  \end{align}

  Hence,  by(\ref{eq:leftlimitatkover2}) and(\ref{eq:rightlimitatkover2}), continuity at  $x = k/2 $ is proved. That of $x = -k/2 $ is proved similarly.
  since $ y_k \in C^1 [-k/2,k/2]$ the left hand side derivative of $y_[k+1] $ at  $x = k/2 $ is $y_k'(k/2)$.
  \begin{align}\label{eq:rightderivativeatkover2}
 \lim \limits_{x\rightarrow  \frac{k}{2}+}  \frac{y_{k+1}(x)-y_{k+1}(k/2)}{x-k/2}& = \lim \limits_{x\rightarrow \frac{k}{2}+}  \frac{y_k'(x-1/2)+y_k(x-1)-y_k(k/2)}{x-k/2} \nonumber \\
   & = \lim \limits_{x\rightarrow \frac{1}{2}+} y_k''(x-1/2)+y_k'(x-1) \nonumber \\
   &= y_k'(k/2)- y_k'(k/2-1)+  y_k'(k/2-1 = y_k'(k/2).
\end{align}
Since $y_k \in C^1 $ is $y_{k+1}$ is continuous on $[-(k+1)/2,(k+1)/2] $.
\end{proof}

\begin{theorem}\label{eq:cinfinityIVP}
Consider the differential-difference equation (\ref{eq:DDE}) with additional conditions
\begin{equation}\label{eq:cinfinityinitialvalue}
  \begin{cases}
     & y(x)= h(x), \, x \in [-1/2, 1/2], \, h \in C^\infty [-1/2, 1/2],\\
     & h^{(i)}(0)= h^{(i-1)}(1/2)- h^{(i-1)}(-1/2),\quad i \in \mathbb{N}.
  \end{cases}
\end{equation}
Then there exist a unique solution $y \in C ^ \infty (\mathbb{R })$  of the differential-difference equation.
 \end{theorem}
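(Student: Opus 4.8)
The plan is to bootstrap the finite-regularity theorem just proved into a global $C^\infty$ statement, using the uniqueness clause to patch the finite pieces together and then using the equation itself to upgrade the regularity. First I would observe that a $C^\infty$ datum $h$ satisfying all the compatibility relations $h^{(i)}(0)=h^{(i-1)}(1/2)-h^{(i-1)}(-1/2)$ trivially meets the hypotheses of the preceding theorem for \emph{every} $k\in\mathbb{N}$: indeed $h\in C^\infty\subset C^k$, and the first $k$ compatibility conditions hold. Hence for each $k$ the preceding theorem furnishes a unique solution $y_k\in C[-k/2,k/2]$ of the differential-difference equation on that interval.

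Next I would glue these solutions together. By the uniqueness assertion of the preceding theorem, the restriction of $y_{k+1}$ to $[-k/2,k/2]$ is again a solution carrying the same initial datum $h$, so it must equal $y_k$. The family $\{y_k\}$ is therefore consistent and defines a single function $y\colon\mathbb{R}\to\mathbb{R}$ by $y|_{[-k/2,k/2]}=y_k$. Since every $x\in\mathbb{R}$ lies in the interior of some $[-k/2,k/2]$ and each $y_k$ is continuous, $y\in C(\mathbb{R})$; and for each fixed $x_0$, choosing $k$ with $x_0\in(-(k-1)/2,(k-1)/2)$ shows that $y$ is differentiable at $x_0$ with $y'(x_0)=Ly(x_0)$. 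Thus $y$ is a global continuous solution of $y'=Ly$.

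The regularity is then improved by a bootstrap on the equation itself. Suppose $y\in C^{m}(\mathbb{R})$ for some $m\ge 0$. Since translations preserve $C^m$ and a difference of $C^m$ functions is again $C^m$, the right-hand side $Ly(x)=y(x+1/2)-y(x-1/2)$ belongs to $C^{m}(\mathbb{R})$; by the equation $y'=Ly\in C^m(\mathbb{R})$, and hence $y\in C^{m+1}(\mathbb{R})$. Starting from $y\in C^0(\mathbb{R})$ and iterating yields $y\in\bigcap_{m\ge 0}C^m(\mathbb{R})=C^\infty(\mathbb{R})$. Uniqueness in the $C^\infty$ class is inherited from the finite-$k$ theorem: any $C^\infty$ solution with datum $h$ restricts on each $[-k/2,k/2]$ to a solution covered by that theorem, hence coincides with $y_k$ and therefore with $y$.

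The step I expect to be the main obstacle is verifying that the patched function $y$ is genuinely continuous --- and in fact differentiable with $y'=Ly$ --- across every breakpoint $x=n/2$, $n\in\mathbb{Z}$. This is precisely where the \emph{full} infinite family of compatibility conditions is needed: in the inductive construction the $n$-th relation $h^{(n)}(0)=h^{(n-1)}(1/2)-h^{(n-1)}(-1/2)$ is exactly what matches the one-sided values and derivatives of the two pieces meeting at the $n$-th breakpoint. Once continuity together with the pointwise validity of $y'=Ly$ off a discrete set is secured, differentiability at the breakpoints follows from the fact that $Ly(x)\to Ly(x_0)$ as $x\to x_0$ by continuity of $y$, so that $\lim_{x\to x_0}y'(x)$ exists and equals $Ly(x_0)$; the bootstrap above then runs without further obstruction.
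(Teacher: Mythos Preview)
Your argument is correct and takes a genuinely different route from the paper. You bootstrap the finite-$k$ theorem: apply it for every $k$, glue the resulting $y_k$'s by uniqueness into a global $C^0$ solution of $y'=Ly$, and then use the equation itself ($y'\!=\!Ly$ with $L$ translation-based) to climb from $C^m$ to $C^{m+1}$ for all $m$. The paper instead reproves existence from scratch by an operator-recurrence method: it splits $h$ into $y_{-1},y_0$ on the two half-intervals, derives a second-order recurrence $y_n=E^{-1/2}Dy_{n-1}+E^{-1}y_{n-2}$, solves the associated characteristic equation $\lambda^2-\lambda E^{-1/2}D+E^{-1}=0$ with roots $E^{-1/2}\Phi(D)$, $E^{-1/2}\Psi(D)$ where $\Phi,\Psi=(D\pm\sqrt{D^2+4})/2$, and obtains a closed formula for each $y_n$ as a Fibonacci-type polynomial $\tfrac{\Phi^n-\Psi^n}{\sqrt{D^2+4}}$ in $D$ acting on shifts of the data; a parallel backward recurrence handles $y_{-n}$, and uniqueness is argued by the same step-by-step induction you invoke. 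What the paper's approach buys is an explicit operator representation of the solution on each half-interval; what your approach buys is a much shorter argument that actually delivers the $C^\infty$ regularity cleanly via the bootstrap, a point the paper's proof treats rather informally.

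One small remark: the paragraph you flag as the ``main obstacle'' --- continuity and differentiability of the patched function at the breakpoints $x=n/2$ --- is in fact already absorbed entirely into the finite-$k$ theorem you are quoting. That theorem hands you $y_k\in C[-k/2,k/2]$ satisfying $y_k'=Ly_k$ on $[-(k-1)/2,(k-1)/2]$, with all breakpoint matching (using the compatibility relations) done inside its proof. So in your argument there is nothing further to check at the breakpoints; the gluing is automatic once you cite that theorem, and the bootstrap runs immediately.
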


\begin{proof}
  For mathematical necessity let us consider the restrictions the initial function $h$ as
  \begin{equation}\label{eq:initaldatabreak}
    \begin{cases}
      h(x)|_{(-1/2, 0]} & := y_{-1}(x)  \\
      h(x)|_{(0, 1/2]} & := y_{0}(x).
    \end{cases}
  \end{equation}
   By applying the operator $ E^{-1/2}$ to the differential-difference equation in \ref{eq:DDE} and rearranging, we get
  \begin{equation}\label{eq:forwardextension}
    y(x)= y(x-1)+ y'(x-1/2),\, x \in \mathbb{R}.
  \end{equation}
  Let $x \in (1/2, 1]$. Then  $x-1 \in (-1/2, 0]$, and  $x-1/2 \in (0, 1/2]$. Accordingly, by (\ref{eq:initaldatabreak}) and (\ref{eq:forwardextension})

  \begin{equation}\label{eq:firstforward}
    y(x)  = y_{-1}(x-1)+ y_0'(x-1/2):=y_1(x),\quad x \in (1/2, 1].
  \end{equation}
Thus we have calculated the value $y $ on a new interval  $(1/2, 1] $. Let us denote by $y_{n}$ the value of  $y$ obtained on the interval $ ( n/2, (n+1)/2], n \in \mathbb{N} $. Then we have the recurrence relation
$$  y_{n}(x)  = y'_{n-1}(x-1/2)+ y_{n-2}(x-1)= E^{-1/2}Dy_{n-1}(x)+E^{-1}y_{n-2}(x), $$

which yields a difference equation on continuous space and with operator coefficients
\begin{equation}\label{eq:forwardrecurrence}
  y_{n}(x)- E^{-1/2}Dy_{n-1}(x)+ E^{-1}y_{n-2}(x)=0.
\end{equation}
The characteristic equation of the difference equation (\ref{eq:forwardrecurrence}) is given by
\begin{equation}\label{eq:backwardopratorcharactersticeq}
  \lambda^2  - \lambda E^{-1/2}D + E^{-1} =0,
\end{equation}
 and the roots of the characteristic equation are given by
$$ \lambda = \lambda_1 = E^{-1/2} \Phi (D),\quad \lambda = \lambda_2 = E^{-1/2} \Psi (D),$$
where
\begin{equation}\label{eq:PhiandPsiodD}
  \Phi (D)= \frac{D + \sqrt{D^2+4}}{2} ,\quad  \Psi (D)= \frac{D - \sqrt{D^2+4}}{2}.
\end{equation}
 For arbitrary function $ A $ and $ B $, the general solution of (\ref{eq:forwardrecurrence}) takes the form
\begin{equation}\label{eq:forwardsolform}
  y_{n}(x)= E ^{-n/2} \Phi ^n( D ) A(x)+  E ^{-n/2} \Psi ^n( D )B(x).
\end{equation}
The specific values of $ A $ and $ B $ for the current initial value problem are determined by the given initial functions $y_{-1}$ and $ y_{0}$  as
 \begin{equation}\label{eq:AandB}
     A(x)= \frac{E^{1/2} \Psi ^{-1}y_0(x)-y_{-1}(x) }{\sqrt{ D^2 +4 }},\quad B(x)= \frac{  y_{-1}(x)- E^{1/2}\Phi ^{-1}y_0(x) }{\sqrt{ D^2 +4 }}.
 \end{equation}
Replacing the values of $ A(x)$ and $ B(x)$ written in (\ref{eq:AandB}) into (\ref{eq:forwardsolform}) and then rearranging yields
\begin{equation}\label{eq:ysubn}
    y_n(x) = \frac{ E ^{-(n+1)/2}}{\sqrt{D^2+4}}  [ \Phi ^n(D) -\Psi ^n (D)] y_{-1}(x) - \frac{ E ^{-n/2}}{\sqrt{D^2+4}}  [ \Phi ^{n+1}(D) -\Psi ^{n+1}(D)] y_{0}(x).
\end{equation}
 By applying the operator $ E^{1/2}$ to the differential-difference equation in \ref{eq:DDE} and rearranging, we get

\begin{equation}\label{eq:backwardextension}
    y(x)= y(x+1)- y'(x+1/2),\, x \in \mathbb{R}.
  \end{equation}
  Let $ x \in (-1, -1/2 ]$. Then  $x+1 \in (0, 1/2]$, and  $x + 1/2 \in (-1/2, 0]$. Accordingly, by (\ref{eq:initaldatabreak}) and (\ref{eq:backwardextension})

  \begin{equation}\label{eq:firstbackward}
     y(x)  = y_{0}(x+1)- y_{-1}'(x+1/2):= y_{-2}(x),\quad x \in (-1, -1/2].
  \end{equation}
Thus we could calculate $ y $ on a new interval $(-1, -1/2] $. Let $y_{-n}$  be the calculated  value of  $y$  defined on the interval $ ( -n/2, (1-n)/2], n \in \mathbb{N} $. We get the general recurrence relation
 $$ y_{-n}(x)  = y_{2-n}(x+1)- y'_{1-n}(x+1/2)= Ey_{2-n}(x) - D E^{1/2}y_{1-n}(x),  $$
which yields a second order difference equation on continuous space and with operator coefficients
\begin{equation}\label{eq:backwardrecurrence}
  Ey_{2-n}(x)- E^{1/2} D y_{1-n}(x )- y_{-n}(x)=0.
\end{equation}
  The  characteristic equation for (\ref{eq:backwardrecurrence}) is given by
\begin{equation}\label{eq:backwardopratorcharactersticeq}
   E \lambda^2 -  E^{1/2}D \lambda -1 =0.
\end{equation}
The  roots  of the characteristic equation (\ref{eq:backwardopratorcharactersticeq}) are given by
$$ \lambda = \lambda_1 = E^{1/2} \Phi (D),\quad \lambda = \lambda_2 = E^{ 1/2} \Psi (D), $$
where $  \Phi (D)$ and $\Psi (D)$ are as defined in (\ref{eq:PhiandPsiodD}). In a similar procedure that led us to (\ref{eq:ysubn}), we obtain
\begin{equation}\label{eq:ysubminusn}
    y_{-n}(x) = \frac{ E ^{(n-1)/2}}{\sqrt{D^2+4}}  [ \Phi ^n(D) -\Psi ^n (D)] y_{-1}(x) + \frac{ E ^{n/2}}{\sqrt{D^2+4}}  [ \Phi ^{n-1}(D) -\Psi ^{n-1}(D)] y_{0}(x).
\end{equation}
Combining (\ref{eq:ysubn}) and (\ref{eq:ysubminusn}), we get the solution
\begin{equation}\label{eq:solution}
  y(x)= \sum_{n= -\infty}^{\infty} y_{n}(x) \chi_{(n/2, (1+n)/2] }(x).
\end{equation}
Now we proceed to the proof of uniqueness of the solution given in (\ref{eq:solution}).  Suppose that $ y $, $ \tilde{y} $ are solutions of initial value problem for differential-difference equation. From the given initial condition, $ y(x) = \tilde{y}(x) $ on the interval $ [-1/2,1/2]$.  Consequently, $y_i=  \tilde{y}_i,\, i=-1,0 $ by (\ref{eq:initaldatabreak}). We follow by induction  to prove  $y_i = \tilde{y}_i,\, i \in \{-1,0\} \cup \mathbb{N} $. Suppose that $y_i=  \tilde{y}_i $ for some $i=k,k+1,\, k=-1,0,1,...$, where $y_i$ is the part of the solution defined on the interval $(i/2, (i+1)/2], \, i=-1,0,...$. Then  by forward extension relation (\ref{eq:forwardextension}), we get $y_{k+2}=  \tilde{y}_{k+2}$. A similar argument follows for the backward extension. So  $ y_i(x)=  \tilde{y}_i(x)$ on $ \mathbb{R} $. This completes the  proof of the theorem.
\end{proof}

\begin{remark}
  In the proof of Theorem \ref{eq:cinfinityIVP}, we are not interested in the operational definition of the operators $\Phi $  and $\Psi $  which involve some square roots. However, for every $ n \in \mathbb{N} $, $ \frac{\Phi^n  -\Psi^n  }{\sqrt{D^2+4}}$ is a polynomial (radical free) in $ D $ which has a usual definition, whereas $\Phi^0= \Psi^0= I $ is the identity operator so that $\Phi ^0- \Psi^0 $ is the zero map. Indeed,
  \begin{align*}
    \Phi^n(D)  -\Psi^n(D)  &=  \left( \frac{D + \sqrt{D^2+4}}{2}\right)^n-\left( \frac{D- \sqrt{D^2+4}}{2}\right)^n \\
     & = \frac{1}{2^n} \sum_{s=0}^{n}\binom{n}{s}D^{n-s} ( D^2+4)^{s/2}- \frac{1}{2^n} \sum_{s=0}^{n} (-1)^s \binom{n}{s}D^{n-s} ( D^2+4)^{s/2} \\
     & = \frac{1}{2^n} \sum_{s=0}^{n} (1+(-1)^s) \binom{n}{s} D^{n-s} (D^2+4)^{s/2}\\
     & = \frac{1}{2^{n-1}}\sum_{k=0}^{\lfloor \frac{n-1}{2}\rfloor}\binom{n}{2k+1} D^{n-2k-1}(D^2+4)^k \sqrt{D^2+4}
  \end{align*}
    The terms with even index $ s $ vanish. Because in that case $ 1+(-1)^{1+s}= 0$. Therefore,

    \begin{equation}\label{eq:phndandpsindexpanded}
          \frac{\Phi^n(D)  -\Psi^n(D)}{\sqrt{D^2+4}}= \frac{1}{2^{n-1}}\sum_{k=0}^{\lfloor \frac{n-1}{2}\rfloor}\binom{n}{2k+1} D^{n-2k-1}(D^2+4)^k .
    \end{equation}

\end{remark}

\begin{remark}
  The condition that the initial function $ h \in C ^ \infty[-1/2, 1/2]$  alone does not guarantee the  existence solution $y $. That is why  we include additional condition $ h^{i}(0) = h^{i-1}(1/2)- h^{i-1} (-1/2), i=1,2,...,k $. We may define initial function $h$  satisfying this additional condition as \emph{admissible initial data}. For example, if  $ h(x)= e^x,\,  x \in  [-1/2, 1/2] $, then $y(x):= y_1(x)= e^x(e^{-1}+e^{-1/2}),\, x \in (1/2, 1]$, showing that the solution is discontinuous at $x=1/2$. Hence $h(x)=e^x$ is not \emph{admissible initial data}. On the other hand, if we select the initial function $ h(x)= x^2,\,  x \in  [-1/2, 1/2] $, then $y=(x):= y_1(x)= x^2, \,  x \in  [-1/2, 1/2]  $. In fact, in this case $y(x)=x^2, x \in \mathbb{R}  $, which is a smooth function is an \emph{admissible initial data} as well.
\end{remark}
\section{Discussions of the results}
This paper discusses the differential-difference equations (\ref{eq:DDE}) as an alternative mathematical model of the classical mixing problem of fluid flow. Because  the classical model is usually linear differential equation, the solution techniques are more obvious. Here, we established the existence of solutions for the differential-difference equation using different methods. In this model, we assumed that $y(x)$ to be the amount of a solute dissolved in the unit volume at an instant $x$. It is customary to use $t$  for time, and $x$ for spatial variable, for physical interpretation. Thus we may think $x$ to be time. In the subsection where we applied Fourier transform method, we assumed $x$ as a spacial variable on $\mathbb{R}= (-\infty \infty)$.

 Furthermore, the current problem is interpreted as  finding  all plane curves $y =f(x)$ where the slope of  the chord  connecting two points $(x-1/2, y(x-1/2)$ and $(x+1/2, y(x+1/2)$ on the curve is always equal to the slope of the tangent line at the point  with mid-point abscissa. Because the $[ x-1/2, x+1/2 ]$ is a unit interval, the difference $y(x+2/2)-y(x-1/2) $ is just the chord's slope.

 As regards the solutions of the differential-difference equation (\ref{eq:DDE}), by inspection and consideration of the properties of centred difference and the derivative, we may observe that constant functions, linear functions, and quadratic functions are solutions of the differential-difference equation (\ref{eq:DDE}). These solutions fall into the class of analytic functions. Considering Taylor's series expansion, we could reach  this conclusion. As the first few terms, including the constant term, are removed from the equation, the terms involving $x$, and $x^2$  can have  arbitrary coefficients.

 However, there   may  exist  further solutions that are the outcome of the infinite system of linear equations that are established by Taylor's series method.  We have shown, by Fourier transform methods,  the existence of some other solutions.

 However it remains a question whether the solutions that were obtained by Fourier transform methods are the possible solutions that would be obtained from Taylor's series.  The other component of the current paper  is the initial value problem for the differential-difference equation (\ref{eq:DDE}).   The interval of existence of solutions for the initial value problem for the differential difference equation \ref{eq:DDE} depends on the order of smoothness of the initial data. In general,  we  get a unique solution on the existence interval, $[-k/2, k/2]$ for the initial data $y_0 \in C ^k[-1/2, 1/2)$. The continuation of this solution requires a set of $k$ additional conditions given in (\ref{eq:ckplusoneinitialvalue}) are satisfied by $y_0$. Therefore smoothness of the initial function $y_0$ alone does not guarantee the continuity of the solution.

\section{Conclusions and Possible Future Works}
In this paper we have discussed some kind of linear differential-difference equation on continuous space, its solution techniques, including its initial value problem.  The explicit closed form solution of the initial value problem is formulated. There may be a wider class  of differential-difference equation,$Dy(x)= g(y,Ly )$   which we may name \emph{differential-difference equation nonlinear in the difference part}, and $ L y(x)= f( y, Dy(x))$ which we may name \emph{differential-difference equation nonlinear in the differential part}. This types of problems may be studied without or with some given initial conditions. Some real life application in science and engineering may be incorporated.

\section*{Conflict of Interests}
The author declare that there is no conflict of interests regarding the publication of this paper.

\section*{Acknowledgment}
The author is thankful to the anonymous reviewers for their constructive and valuable suggestions.

\section*{Funding} This Research work is not funded by any institution or person.

\end{document}